\theoremstyle{plain}
\newtheorem{thm}{Theorem} [subsection]
\newtheorem{prop}[thm]{Proposition}
\newtheorem{lema}[thm] {Lemma}
\newtheorem{cor}[thm]{Corollary} 
\newtheorem*{mythm}{Theorem} 
\theoremstyle{definition}
\newtheorem{defn}[thm]{Definition}
\newtheorem{exmple}[thm]{Example}
\newtheorem{rem}[thm]{Remark} 
\newtheorem{notn}[thm]{Notation}
\newcommand*{\C}{\ensuremath{\mathbb{C }}}         
\newcommand*{\R}{\ensuremath{\mathbb{R }}}                        
\newcommand*{\Z}{\ensuremath{\mathbb{Z }}}                        
\newcommand*{\Zlat}[1] {\ensuremath{\mathbb{Z }^{#1} \cup \{ \infty\}}} 
\newcommand*{\Q}{\ensuremath{\mathbb{Q }}}                       
\newcommand*{\res}{\ensuremath{\mathbf{k}}}  
\newcommand*{\K}{\ensuremath{\mathbb{K }} }  
\newcommand*{ \Kn}[1] {\ensuremath{\mathbb{K }_{(#1)}}} 
\newcommand*{\Lf}{\ensuremath{\mathbb{L }} }  
\newcommand*{ \Ln}[1] {\ensuremath{\mathbb{L}_{(#1)}}} 
\newcommand*{ \no}[1] {\ensuremath{(#1)^{\times}}} 
\newcommand*{ \nob}[1] {\ensuremath{#1^{\times}}} 
\newcommand*{\Sc}[1]{\ensuremath{\mathcal{S}_{#1}} }  
\newcommand*{\eqdef}{\stackrel{\text{def}}{=}}     
\newcommand*{\X}{\ensuremath{\mathcal{X}}} 
\newcommand*{\I} {\ensuremath{\mathcal{I}}} 
\newcommand*{\Rint}[1]{\ensuremath{\mathcal{O}_{#1}}} 
\newcommand*{\mideal}[1]{\ensuremath{\mathfrak{m}_{#1}}} 
\newcommand*{\gm} {\ensuremath{\Gamma}} 
\newcommand*{\gmext}{\ensuremath{\Gamma \cup \{ \infty\} }} 
\newcommand*{\xbar}[1] {\ensuremath{\overline{#1}}} 
\newcommand*{\tor} {\ensuremath{\mathcal{T}}}
\newcommand*{\ndim}[1] {\textit{#1}-dimensional}
\newcommand*{\mbf}[1]{\ensuremath{\mathbf{#1}}}
\newcommand*{\bfvec}[1]{\ensuremath{(#1_{1}, \ldots, #1_{n})}}
\newcommand*{\ben} {\begin{enumerate}}
\newcommand*{\een} {\end{enumerate}}
\newcommand*{\bit} {\begin{itemize}}
\newcommand*{\eit} {\end{itemize}}
\newcommand*{\noarch}{non-Archimedean}
\begin{document}
\title{Tropical geometry over higher dimensional local fields}
\author{S. Banerjee}
\address{Department of Mathematics\\
Yale University\\ 
New Haven, CT 06511, USA}
\email{soumya.banerjee@yale.edu}

\keywords{tropical geometry, higher local fields}
\subjclass[2010]{Primary : 14T05; Secondary : 52A20 }
\date{\today }

\begin{abstract} 
We introduce the tropicalization of closed subschemes of a torus defined over a higher dimensional local field. We study the basic invariants of such tropicalizations. This is a generalization of the results of Einslieder, Kapranov and Lind, \cite{EKL}, Speyer \cite{Sp} and Speyer and Sturmfels \cite{Ss} to higher dimensional local fields. 
\end{abstract}
\maketitle

\section{Introduction}

Let $\res$ be a local field and $\res^{al}$ be a fixed algebraic closure. The valuation on $\res$ extends to a group homomorphism $\nu : (\res^{al})^{\times} \rightarrow \Q.$ If $\X$ is any subvariety of the $m$- dimensional algebraic torus $\tor_{\res} = \mathbb{G}_{m, \res}^{m},$ one defines the tropical  variety associated to $\X$, denoted by $Trop(\X),$ as the closure of the set $\{ \nu(\mbf{z}) : \mbf{z} \in \X \} \subset \R^{m}.$  Such sets were studied by Bieri and Groves, \cite{Bieri-Groves} and later by Einslieder, Kapranov and Lind \cite{EKL}, Speyer \cite{ Sp}; Speyer and Sturmfels \cite{Ss}. More recently,  Payne \cite{Pay1}, has considered further generalizations of tropicalization (the map $Trop$)  to toric varieties.

Higher dimensional local fields are a generalization of the concept of local fields. They were introduced by Parshin \cite{Parshin} and Kato \cite{Kato3, Kato2, Kato1} to generalize class field theory to higher dimensional schemes. Informally speaking, a \ndim{n} local field is a sequence of discretely valued complete fields $(\Kn{0}, \Kn{1}, \ldots \Kn{n})$ such that each $\Kn{i}$ is the residue field of $\Kn{i+1}.$ For instance, $\mathbb{F}_{p}((t_{1}))((t_{2}))\ldots ((t_{n}))$ is a \ndim{n} local field. \\
Geometric construction of such a field was given by Parshin. The idea is to look at the completion of  function fields associated to a fixed flag of smooth closed sub-schemes $\Sc{0} \subset \ldots \Sc{n-1}\subset \Sc{n}$ where dim$(\Sc{i}) = i.$ We refer the reader to \cite{Parshin} for details.

In case of a \ndim{n} local field $\K,$ the value group is a rank \textit{n} free abelian group. As usual, the valuation group comes with an ordering. It turns out to be the lexicographic ordering on $\Z^{n}.$ This means that the components of the valuation are asymmetric in nature. It  turns out that  extension of this map to the algebraic closure $ \K^{al}$ is not as straightforward as in the rank one (local field) case and some care is needed to define it.

The recent interest in Okounkov bodies, as exemplified in the work of Mustata and Lazarsfeld \cite{ML}, have demonstrated the importance of such vector valued valuations. Motivated by their work, we found out that the theory of tropical varieties over \noarch fields generalizes to higher local fields.

 Let $\K$ be a \ndim{n} local field. Generalizing the one dimensional valuation map we have a vector valued valuation $\nu :\no{ \K^{al}} \rightarrow \Gamma_{\R}.$ where $\Gamma_{\R} \cong \R^{n}$ as an abelian group with lexicographic order.
Now if $\tor_{\K}$ is an \ndim{m} algebraic torus and $M$ is the lattice of characters then we define $Trop$ as a map from $\tor_{\K}(\K^{al})$ to $\mbox{Hom}(M, \Gamma_{\R}),$ which is ``dual" to the co-ordinate wise valuation, see  definition \ref{D:trop} for a precise statement.
The main result of this paper, Theorem \ref{T:main2},  is the following:
\begin{mythm} 
The tropicalization of a $d$- dimensional closed, reduced, irreducible subscheme of an algebraic torus over a \ndim{n} local field is a rational polyhedron complex of dimension $nd.$
\end{mythm}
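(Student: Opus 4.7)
I would prove the theorem by induction on the level $n$ of the local field, using the flag structure of higher local fields to reduce to a dimensional analysis one level at a time. The key feature of $\K = \K_{(n)}$ is that its residue field $\K_{(n-1)}$ is an $(n-1)$-dimensional local field, and the value group $\gm \cong \Z^n$ (lex ordered) projects onto its top coordinate $\Z$, giving a rank-one valuation $\nu_{\mathrm{top}}$ whose residual data is governed by an $(n-1)$-dimensional tropicalization over $\K_{(n-1)}^{al}$.

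\textbf{Base case ($n=1$).} This is precisely the Bieri--Groves / Einsiedler--Kapranov--Lind / Speyer--Sturmfels theorem recalled in the introduction: a $d$-dimensional closed, reduced, irreducible subscheme of $\tor_{\res}$ has tropicalization a rational polyhedral complex of pure dimension $d$ in $\R^m$.

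\textbf{Inductive step.} Let $\X \subset \tor_\K$ be $d$-dimensional, closed, reduced and irreducible. Using the projection $\mathrm{pr}_{\mathrm{top}}: \hm(M,\gm_\R) \to \R^m$ picking the top coordinate of the valuation, the composition $\mathrm{pr}_{\mathrm{top}} \circ \tr$ is exactly the classical rank-one tropicalization map attached to $\nu_{\mathrm{top}}$. By the base case, $\mathrm{pr}_{\mathrm{top}}(\tr(\X))$ is a rational polyhedral complex $\Sigma$ of pure dimension $d$ in $\R^m$. For a rational point $w$ in the relative interior of a maximal cell $\sigma$ of $\Sigma$, the initial degeneration $\mathrm{in}_w \X$ is a closed subscheme of a torus over $\K_{(n-1)}^{al}$ whose dimension I claim is still $d$; I would then apply the inductive hypothesis to each irreducible component of $\mathrm{in}_w \X$, obtaining rational polyhedral complexes of dimension $(n-1)d$ in $\hm(M, \Z^{n-1}\otimes\R)$. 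The total tropicalization is assembled by fibering these over $\Sigma$: along the relative interior of each maximal cone of $\Sigma$ the data of $\mathrm{in}_w \X$ is constant, so the preimage in $\tr(\X)$ is a translate of a $(n-1)d$-dimensional rational polyhedral complex over a $d$-dimensional base, giving the expected total dimension $d + (n-1)d = nd$. Lower-dimensional cells of $\Sigma$ contribute cells of smaller dimension, which glue along common faces to yield a rational polyhedral complex structure on all of $\tr(\X)$.

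\textbf{Main obstacle.} The delicate point, as in the rank-one case, is the dimension equality $\dim \mathrm{in}_w \X = d$ for weights $w$ in the relative interior of a maximal cell of $\Sigma$. Here one needs a Gröbner-type or flat-family argument adapted to the $\Z^n$-valued setting: the key is to show that for a generic such $w$ the family over $\mathrm{Spec}\,\Rint{\K}$ degenerating $\X$ to $\mathrm{in}_w \X$ is flat, so that dimension is preserved upon passing to the special fiber over $\K_{(n-1)}^{al}$. A secondary obstacle is coherence: one must verify that the initial degenerations vary polyhedrally as $w$ moves within $\Sigma$, so that the resulting stratification of $\tr(\X)$ is finite and rational. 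Both issues are controlled by adapting the Bieri--Groves machinery (Newton polytopes, Gröbner complexes) from the rank-one valuation $\nu_{\mathrm{top}}$ to the refined valuation $\nu$, exploiting the fact that $\gm$ carries the lexicographic order so that the lower coordinates refine, but never contradict, the stratification provided by the top coordinate.
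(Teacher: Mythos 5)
Your proposal follows essentially the same route as the paper: the paper defines retraction maps $\mathfrak{r}^{j}_{n}$ from the full tropicalization to the tropicalization of $\X$ viewed over the truncated local field, observes that the fiber over a weight is the tropicalization of the corresponding initial degeneration, asserts that the initial degeneration has dimension $d$ for generic weight, and concludes $\dim \mathrm{Trop}(\X) = nd$ by induction on $n$; your argument via $\mathrm{pr}_{\mathrm{top}}$ is precisely the $j=1$ instance of this retraction and the same induction. You are in fact somewhat more explicit than the paper in flagging that $\dim(\mathrm{in}_w\X)=d$ on the relative interior of a maximal cell is the crux and would require a flat-family/Gr\"{o}bner argument in the rank-$n$ setting --- a point the paper asserts rather than proves in detail --- so your proposal is, if anything, a slightly more honest account of the same argument.
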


In the case when $\X$ is a hypersurface, we provide an independent geometric proof of the above theorem by introducing the \textit{n-extended Newton polytope}. This generalization is motivated by the \textit{extended Newton polyhedron} construction of the authors of \cite[Section 2]{EKL}. Tropicalization of a hypersurface, defined over a field with arbitrary value group, has been considered by Aroca in \cite{Aroca}. In the case of higher local fields, our results are obtained (independently) as a special case of \cite{Aroca}. 

We give an example of a hypersurface (Example \ref{E:impure}), which shows that polyherdral complexes obtained as tropicalization over higher dimensional fields are not necessarily pure. i.e. the maximal faces (with respect to inclusion) need not have the same dimension.   

We have not addressed the question of  connectedness of the tropicalization map in this paper. In case of a local field, one standard approach is to view the tropicalization of a variety, as a suitable projection of the  Berkovich analytification of the same variety, see \cite{Pay2}. However, the corresponding theory of analytification associated to a higher rank valuations is not developed. We hope that our work will be helpful to extend the theory of rigid analytic spaces to higher local fields, in the spirit of \cite{Pay2}.

In their fundamental work, Speyer and Sturmfels pioneered the use of Gr\"{o}bner basis techniques to study tropicalization. It is possible to interpret the higher rank value groups, that arise in our context, as generalized weight vectors on polynomial rings. It then seems possible to extend the results of Gr\"{o}bner basis theory to higher local fields.

\subsection*{Acknowledgement}
I am deeply grateful to my advisor M.M.Kapranov for introducing me to this problem, many useful discussions and suggesting key improvements in the exposition and corrections. It is a pleasure to thank Sam Payne for many enlightening discussions on tropical geometry.

\tableofcontents
\section{Valuations}\label{Sec:Val}
In this section we recall some well known results in valuation theory. We are interested in the (special) case of valuations which are discrete and  of finite rank. Our main reference is \cite{Enp}.
\subsection{Ordered abelian groups} 
\begin{defn}
An ordered abelian group $(\gm, +, \leq),$ is an abelian group $(\gm, +)$ such that the underlying set $\gm$ is a totally ordered and addition preserves the ordering.
\end{defn}
A subgroup $\Delta \subset \gm$ is called convex if for any $\gamma \in \gm$  and $ 0 \leq \gamma \leq \delta \in \Delta \Rightarrow \gamma \in \Delta .$ It is clear that the set of all convex subgroups is linearly ordered by inclusion. The maximal length of a chain of distinct convex proper subgroups is called the \textbf{rank} of the ordered abelian group.\\
In particular the only proper convex subgroups of a rank 1 abelian group is the trivial subgroup $\{0\}.$
\begin{exmple} 
\ben [(a)]
\item $(\Z,+) , (\Q, +)$  are rank \textit{1} abelian groups with their natural ordering induced from $\R.$
\item \label{I:one}
If $(\Gamma_{1}, \preceq_{1})$ and $(\Gamma_{2}, \preceq_{2})$ are two ordered abelian groups, then we can define an ordering, called the lexicographic order on $\gm \eqdef \Gamma_{1} \oplus \Gamma_{2}.$ It is defined as follows:
\[ (\gamma_{1}, \gamma_{2}) \preceq (\gamma^{\prime}_{1}, \gamma^{\prime}_{2}) \text{ if   either }
\gamma_{2} \preceq_{2} \gamma^{\prime}_{2} \text{ or } \gamma_{2} =_{2} \gamma^{\prime}_{2}  \text{ and }
\gamma_{1} \preceq_{1} \gamma^{\prime}_{1}   \] 
 Clearly, $\text{rank}(\gm, \preceq) =  \text{rank}(\gm_{1}, \preceq_{1}) + \text{rank}(\gm_{2}, \preceq_{2}).$\\ 
In general $(\Z^{n}, +)$ with its lexicographic order has rank \textit{n}. 

\een
\end{exmple}
An ordering $\preceq$ of $\gm$ is called \textbf{discrete} if there is a unique minimal positive ($ \succeq 0 $) element. 
An ordering $\preceq$ of $ \gm$ is called \textbf{Archimedean} if for every $\alpha, \beta \in \gm$ there is a positive integer  $n$ such that $\beta \preceq n\alpha.$ It is clear that any abelian group with an Archimedean ordering is of rank 1. The following proposition characterizes abelian ordered groups of rank 1.
\begin{prop}[Proposition 2.1.1 \cite{Enp}]
An ordered abelian group $\gm$ is of rank 1 iff it is order isomorphic to a non-trivial subgroup of $(\R, +)$ with the canonical order induced from $\R.$ 
\end{prop}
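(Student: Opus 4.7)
The proposition is the classical theorem of Hölder. I would split the proof into two directions.

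For the easy direction, suppose $\gm$ is order isomorphic to a non-trivial subgroup of $(\R,+)$; I would show directly that any non-zero convex subgroup $\Delta \subset \gm$ must equal $\gm$. Pick $0 < \delta \in \Delta$. Given any positive $\gamma \in \gm$, the Archimedean property of $\R$ produces an integer $n$ with $n\delta > \gamma$, so $0 < \gamma < n\delta$ forces $\gamma \in \Delta$ by convexity; symmetrically for negative elements. Hence the only convex subgroups of $\gm$ are $\{0\}$ and $\gm$ itself, so $\text{rank}(\gm) = 1$.

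For the hard (and more substantive) direction, I would first show that rank $1$ implies Archimedean. Suppose otherwise: there exist positive $\alpha, \beta \in \gm$ with $n\alpha \prec \beta$ for every $n \in \mathbb{N}$. Then
\[
\Delta \eqdef \{\gamma \in \gm : |\gamma| \preceq n\alpha \text{ for some } n \in \mathbb{N}\}
\]
is a convex subgroup containing $\alpha$ but not $\beta$, contradicting rank $1$. So $\gm$ is Archimedean.

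Next, fix any positive element $\alpha_{0} \in \gm$ and define $\phi : \gm \to \R$ by
\[
\phi(\gamma) \eqdef \sup\Bigl\{\tfrac{m}{n} \in \Q : n \in \mathbb{N},\, m \in \Z,\, m\alpha_{0} \preceq n\gamma \Bigr\}.
\]
The Archimedean property makes the set on the right bounded above (choose $N$ with $\gamma \prec N\alpha_{0}$), so $\phi(\gamma) \in \R$ and $\phi(\alpha_{0}) = 1$. I would then verify in turn: (i) $\phi$ is order preserving, by comparing the defining sets; (ii) $\phi$ is a group homomorphism, by showing that for each $\epsilon > 0$ one can find rationals $m_{i}/n_{i}$ with $m_{i}\alpha_{0} \preceq n_{i}\gamma_{i} \prec (m_{i}+1)\alpha_{0}$ (again using Archimedeanness), from which $\phi(\gamma_{1}+\gamma_{2}) = \phi(\gamma_{1}) + \phi(\gamma_{2})$ follows by letting $\epsilon \to 0$; (iii) injectivity: if $\phi(\gamma) = 0$ with $\gamma \succ 0$, choose $n$ with $n\gamma \succ \alpha_{0}$, giving $\phi(\gamma) \geq 1/n > 0$, a contradiction.

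The main obstacle will be step (ii), the additivity of $\phi$, since the supremum/infimum approximation is the delicate part; everything else is bookkeeping around the Archimedean property established at the start. Once $\phi$ is shown to be an injective order-preserving homomorphism, its image is the desired non-trivial subgroup of $(\R,+)$, completing the proof.
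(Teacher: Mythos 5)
The paper does not prove this statement; it cites it directly as Proposition~2.1.1 of Engler--Prestel \cite{Enp}, where the proof given is precisely H\"older's theorem. Your proposal correctly reproduces that standard argument: the easy direction (Archimedean property of $\R$ plus convexity kills any nonzero convex subgroup), the reduction of rank $1$ to Archimedean via the convex subgroup generated by a fixed positive element, and the H\"older embedding $\phi(\gamma) = \sup\{m/n : m\alpha_0 \preceq n\gamma\}$ with the order-preservation/additivity/injectivity checks. So the approach matches the reference; your identification of additivity as the delicate step is accurate, and the rest is sound.

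One small remark unrelated to your argument: the paper's stated definition of Archimedean (``for every $\alpha,\beta$ there is $n$ with $\beta \preceq n\alpha$'') is missing the positivity hypothesis on $\alpha$; taken literally it would only be satisfied by the trivial group. You implicitly (and correctly) used the standard definition requiring $\alpha \succ 0$.
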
 
\begin{cor}\label{C:disc}
Any abelian group with a discrete rank \textit{1} ordering is order isomorphic to $\Z$ .
\end{cor}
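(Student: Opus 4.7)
The plan is to combine the preceding proposition (which embeds any rank $1$ ordered abelian group as an ordered subgroup of $(\R,+)$) with the discreteness hypothesis to pin down the subgroup as a copy of $\Z$. So the starting point is to identify $\gm$ with a non-trivial ordered subgroup $G \subseteq \R$ via Proposition~2.1.1 of \cite{Enp}, and forget $\gm$ in favor of $G$.

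Next I would use discreteness: by hypothesis $G$ has a (unique) smallest strictly positive element, call it $a$. Clearly $a > 0$ in $\R$, and the cyclic subgroup $\Z a \subseteq G$ is order-isomorphic to $\Z$ via $na \mapsto n$ (order-preserving because $a > 0$). The entire content of the corollary is therefore the reverse inclusion $G \subseteq \Z a$.

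To prove $G \subseteq \Z a$, pick an arbitrary $g \in G$. By the Archimedean property of $\R$ (note: here I am using Archimedeanness of $\R$, not of $G$), there is a unique $n \in \Z$ such that $na \leq g < (n+1)a$. Then $g - na \in G$ and $0 \leq g - na < a$. The minimality of $a$ among strictly positive elements of $G$ forces $g - na = 0$, i.e.\ $g = na \in \Z a$. This shows $G = \Z a$, and the map $na \mapsto n$ is the desired order isomorphism $G \cong \Z$.

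There is no serious obstacle: the only thing to be careful about is not to confuse the Archimedean property of the ambient $\R$ (which is always available after embedding) with an Archimedean hypothesis on $G$ itself; the argument uses only the former, together with discreteness of $G$ to conclude $g - na = 0$.
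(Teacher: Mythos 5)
Your proof is correct and is the natural argument the paper intends (the paper states this as an immediate corollary of Proposition~2.1.1 without writing out the details): embed $\gm$ in $(\R,+)$ via the proposition, let $a>0$ be the least strictly positive element guaranteed by discreteness, and use the Archimedean property of $\R$ together with the minimality of $a$ to show $G=\Z a\cong\Z$. Your explicit caution about using Archimedeanness of the ambient $\R$ rather than of $G$ is exactly the right point to flag, and the step $0\leq g-na<a\Rightarrow g-na=0$ is the crux and is handled correctly.
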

\begin{rem}Given any ordered abelian group $\gm,$ we will augment it by a symbol $\infty$ with the properties $ \gamma \preceq \infty$ and $\gamma + \infty = \infty$ for all $\gamma \in \gm .$ We will denote the enlarged set by $\gmext.$ 
\end{rem}
\begin{defn}
A valuation $\nu$ on a field $\res$ is a \textbf{surjective} map $\nu: \res \rightarrow \gmext $ satisfying the following properties:
\begin{eqnarray} \label{P:val}
\begin{array}{c}
\nu(0) \eqdef \infty  \\
\nu(ab) = \nu(a) + \nu(b) \\
\nu(a+b) \geq \min \{\nu(a), \nu(b)\}
\end{array}
\end{eqnarray}
 \end{defn}
 \begin{rem} 
 The rank of a valuation is defined to be the rank of the abelian group $\nu(\res^{\times}).$ If $\gm= \{0\}$ then $\nu$ is called a trivial valuation. 
 \end{rem}
 Associated to a valuation $\nu: \res \rightarrow \gmext $ is the ring $\Rint{\nu} = \{ x \in \res : \nu(x) \geq 0  \}.$ 
 A ring $R \subset \res$ is called a valuation ring if for all $x \in \res$  either $x \in R$ or $x^{-1} \in R.$ The ring $\Rint{\nu}$ is a canonical valuation ring associated to $\nu.$ Conversely we have :
 \begin{prop}[Proposition 2.1.2, \cite{Enp}]\label{P:valring}
 Given any valuation ring $R \subset \res.$ There is an ordering on the abelian group $\res^{\times}/R^{\times}$ such that the  quotient map  $\nu : \res ^{\times}\rightarrow \res^{\times}/R^{\times}$ is a valuation and $R = \Rint{\nu}$
 \end{prop}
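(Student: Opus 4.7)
The plan is to realize the value group as $\gm = \res^\times / R^\times$, written additively, and to extract the ordering directly from the inclusion relation between elements and $R$.

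First I would define an ordering on $\gm$ by declaring $[x] \leq [y]$ iff $y x^{-1} \in R$, where $[x]$ denotes the class of $x \in \res^\times$. I need to check this is well-defined: replacing $x$ by $xu$ and $y$ by $yv$ with $u,v \in R^\times$ replaces $yx^{-1}$ by $(yv)(xu)^{-1} = (yx^{-1})(vu^{-1})$, and since $vu^{-1} \in R^\times$, the condition of lying in $R$ is preserved. Reflexivity and transitivity are immediate, and antisymmetry follows because $y x^{-1} \in R$ and $x y^{-1} \in R$ together force $yx^{-1} \in R^\times$, hence $[x] = [y]$. Totality is precisely the valuation ring hypothesis: for any $x,y \in \res^\times$, at least one of $yx^{-1}$ or its inverse lies in $R$. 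Compatibility with the group law is trivial since $(yz)(xz)^{-1} = yx^{-1}$.

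Next I would verify that the canonical surjection $\nu : \res^\times \to \gm$, extended by $\nu(0) = \infty$, satisfies the valuation axioms of \eqref{P:val}. Multiplicativity $\nu(ab) = \nu(a) + \nu(b)$ is built into the definition of the quotient group. For the ultrametric inequality, given nonzero $a,b,a+b$, I would assume without loss of generality that $\nu(a) \leq \nu(b)$, i.e.\ $b a^{-1} \in R$; then
\[
(a+b) a^{-1} = 1 + b a^{-1} \in R,
\]
so $\nu(a+b) \geq \nu(a) = \min\{\nu(a),\nu(b)\}$. The case $a + b = 0$ is handled by the convention $\nu(0) = \infty$.

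Finally, I would identify $R$ with $\Rint{\nu}$: for $x \in \res^\times$, $\nu(x) \geq \nu(1) = 0$ iff $x \cdot 1^{-1} = x \in R$, and $0 \in R$ matches $\nu(0) = \infty \geq 0$. The step that requires a little care is the well-definedness of the order and the verification of antisymmetry — both rely crucially on passing through $R^\times$ rather than $R$ — but beyond that the argument is a direct translation of the valuation ring axiom into the language of a totally ordered quotient group, so no real obstacle should arise.
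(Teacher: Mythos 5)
Your proof is correct. The paper does not actually supply a proof of this proposition; it simply cites it from Engler--Prestel's \emph{Valued Fields} (Proposition 2.1.2), so there is no ``paper proof'' to compare against. Your argument is exactly the standard one found there and in essentially every treatment of valuation theory: define the order on $\res^\times/R^\times$ by $[x]\le[y]\Leftrightarrow yx^{-1}\in R$, check it is well-defined modulo $R^\times$ and total (the latter being precisely the valuation-ring dichotomy), verify translation-invariance, and then read off the ultrametric inequality from the fact that $1+ba^{-1}\in R$ whenever $ba^{-1}\in R$. Surjectivity of $\nu$ (required by the paper's definition of a valuation) is automatic since $\nu$ is the quotient map, and the identification $R=\Rint{\nu}$ falls out of the definition of the order. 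All the small points that need care — well-definedness under change of coset representative, antisymmetry via $R^\times$ rather than $R$, the degenerate case $a+b=0$ — are handled correctly. No gaps.
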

 The group $\displaystyle \Rint{\nu}^{\times}$ of units in $\displaystyle \Rint{\nu}$ is given by \linebreak[3] $\displaystyle \left \{ x \in \res : \nu (x) = 0  \right \}.$ The ideal $\displaystyle \mideal{\nu} = \{ x \in \res : \nu (x) \ > 0  \}$ is a maximal ideal of $\displaystyle \Rint{\nu}.$ The field $\displaystyle \xbar{\res} \eqdef \Rint{\nu}/ \mideal{\nu} $ is called the \emph{residue field}. It is to be noted that the ring $\displaystyle \Rint{\nu}$ is not in general, a local ring, for arbitrary valuations. 
 \begin{defn}
 Two valuations $v_{i} : \res \rightarrow \gm_{i} \cup \{ \infty \},$ $(i = 1, 2)$ are called equivalent if they define the same valuation ring i.e  $\Rint{\nu_{1}} = \Rint{\nu_{2}}.$
 \end{defn}
 \begin{prop}[Proposition 2.1.3 \cite{Enp}]
 Two valuations $\nu_{i} : \res \rightarrow \gm_{i} \cup \{ \infty \} \; (i=1,2)$  are equivalent iff there is an order-preserving isomorphism 
 $\iota : \gm_{1} \rightarrow \gm_{2} $ such that $\iota \circ \nu_{1} = \nu_{2}$
 \end{prop}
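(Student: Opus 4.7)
The ``if'' direction is essentially immediate: if $\iota: \gm_1 \to \gm_2$ is an order-preserving isomorphism with $\iota \circ \nu_1 = \nu_2$, then because $\iota$ sends $\{\gamma \in \gm_1 : \gamma \geq 0\}$ bijectively onto $\{\gamma \in \gm_2 : \gamma \geq 0\}$, we get $\Rint{\nu_1} = \nu_1^{-1}(\gm_1^{\geq 0} \cup \{\infty\}) = \nu_2^{-1}(\gm_2^{\geq 0} \cup \{\infty\}) = \Rint{\nu_2}$, so the valuations are equivalent.

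For the ``only if'' direction, set $R := \Rint{\nu_1} = \Rint{\nu_2}$. The first step is to exhibit a canonical candidate for $\iota$ using the intermediary valuation constructed in Proposition \ref{P:valring}. Both $\nu_1$ and $\nu_2$ vanish precisely on $R^\times$ (this is exactly the characterization of the units of the valuation ring), and since each $\nu_i$ is a multiplicative homomorphism $\res^\times \to \gm_i$ whose kernel is $R^\times$, the first isomorphism theorem produces an induced group isomorphism $\xbar{\nu_i} : \res^\times / R^\times \xrightarrow{\sim} \gm_i$; surjectivity is the surjectivity assumption on $\nu_i$. I would then define $\iota := \xbar{\nu_2} \circ \xbar{\nu_1}^{-1}$, and extend by $\iota(\infty) = \infty$. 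The identity $\iota \circ \nu_1 = \nu_2$ is forced coset-by-coset in $\res^\times / R^\times$, with the $0$ case handled by the extension.

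The only nontrivial step is checking that $\iota$ is order-preserving, and this is where I would work carefully. The ordering on $\gm_i$ transported through $\xbar{\nu_i}^{-1}$ yields an ordering on $\res^\times / R^\times$ characterized by
\[
[x] \succeq_i [y] \iff \nu_i(x) \geq \nu_i(y) \iff \nu_i(x y^{-1}) \geq 0 \iff x y^{-1} \in R.
\]
The rightmost condition depends only on $R$, so the two induced orderings $\succeq_1$ and $\succeq_2$ on $\res^\times / R^\times$ coincide. Hence $\iota$, which by construction transports $\succeq_1$ to $\succeq_2$ via equalities on the common quotient, preserves the order.

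The only potential subtlety, and the place I would be most careful, is bookkeeping around whether equivalence of valuations (defined via equality of valuation rings) is strong enough to force an \emph{order}-preserving isomorphism rather than merely a group isomorphism between the value groups. The computation above shows that the order data is completely encoded by the divisibility relation $xy^{-1} \in R$, so no additional information beyond $R$ is required, and the entire argument is driven by Proposition \ref{P:valring}.
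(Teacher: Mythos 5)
Your proof is correct. The paper does not actually prove this proposition---it cites it directly from Engler--Prestel (Proposition 2.1.3 of \cite{Enp})---so there is no in-paper argument to compare against. Your argument is the standard one: the ``if'' direction follows from $\iota$ matching the positivity cones, and the ``only if'' direction correctly identifies $R^{\times}$ as the common kernel of both $\nu_i$ on $\res^{\times}$, applies the first isomorphism theorem (using the surjectivity built into Definition of valuation) to get $\xbar{\nu_i}: \res^{\times}/R^{\times} \xrightarrow{\sim} \gm_i$, sets $\iota = \xbar{\nu_2}\circ\xbar{\nu_1}^{-1}$, and then observes that the induced ordering on $\res^{\times}/R^{\times}$ is intrinsic to $R$ via the equivalence $[x]\succeq[y]\iff xy^{-1}\in R$, which is precisely the content of Proposition \ref{P:valring}. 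This is in substance the proof in the reference, and the worry you flag at the end is correctly resolved by that last observation: the order on the value group is recoverable from the valuation ring alone.
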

 \subsection{Extension of valuations} \label{Ss:extval}
If $\res_{2}$ is an extension of $\res_{1}$ and $\nu_{2} : \res_{2}^{\times} \rightarrow \gm_{2}$  and $\nu_{1} : \res_{1}^{\times} \rightarrow \gm_{1}$ are given valuations, we say that $\nu_{2}$ is an extension of $\nu_{1}$ if the following diagram commutes 
\begin{displaymath}
    \xymatrix{
        \res_{1}^{\times} \ar[r]^{i}\ar[d]^{\nu_{1}} &  \res_{2}^{\times}  \ar[d]_{\nu_{2}} \\
        \gm_{1} \ar[r]^{f}       & \gm_{2}}
\end{displaymath}
where $f: \gm_{1} \rightarrow \gm_{2}$ is an order preserving injective group homomorphism. It turns out from a theorem of Chevalley that valuation can always be extended to any field extension. 
\begin{thm}[Chevalley, Theorem 3.1.1, \cite{Enp}] 
For a field $\res,$ let $\mbf{R} \subset \res$ be a subring and $\mathfrak{p} \subset \mbf{R}$ be a prime ideal of $\mbf{R}.$ Then there is a valuation ring $\Rint{}$ of $\res$  and a maximal ideal $\mideal{\Rint{}}\subset \Rint{}$ such that  $\mbf{R} \subset \Rint{}$ and $\mideal{\Rint{}} \cap  \mbf{R} = \mathfrak{p}.$
\end{thm}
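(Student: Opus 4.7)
The plan is to apply Zorn's lemma to a suitable collection of local subrings of $\res$ dominating a localization of $\mbf{R},$ then to show that any maximal such ring must in fact be a valuation ring.

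First I would reduce to the local case by replacing $\mbf{R}$ with its localization $\mbf{R}_{\mathfrak{p}},$ a local ring whose maximal ideal $\mathfrak{p}\mbf{R}_{\mathfrak{p}}$ contracts to $\mathfrak{p}$ on $\mbf{R}.$ Any local subring $(\Rint{}, \mideal{\Rint{}})$ of $\res$ with $\mbf{R}_{\mathfrak{p}} \subset \Rint{}$ and $\mideal{\Rint{}} \cap \mbf{R}_{\mathfrak{p}} = \mathfrak{p}\mbf{R}_{\mathfrak{p}}$ (i.e.\ dominating $\mbf{R}_{\mathfrak{p}}$) then automatically satisfies $\mbf{R} \subset \Rint{}$ and $\mideal{\Rint{}} \cap \mbf{R} = \mathfrak{p},$ so it suffices to produce a valuation ring of $\res$ dominating $\mbf{R}_{\mathfrak{p}}.$

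Next I would consider the poset $\mathcal{F}$ of all local subrings of $\res$ dominating $\mbf{R}_{\mathfrak{p}},$ ordered by domination. The poset is nonempty and a direct check shows that any chain $\{(S_{\alpha}, \mathfrak{m}_{\alpha})\}$ admits an upper bound: the union $S = \bigcup S_{\alpha}$ is local with maximal ideal $\mathfrak{m} = \bigcup \mathfrak{m}_{\alpha}$ (since an element outside $\mathfrak{m}$ is a unit in some $S_{\alpha},$ hence a unit in $S,$ while elements of $\mathfrak{m}$ cannot be units by the chain compatibility), and $\mathfrak{m} \cap \mbf{R}_{\mathfrak{p}} = \mathfrak{p}\mbf{R}_{\mathfrak{p}}.$ Zorn's lemma then yields a maximal element $(\Rint{}, \mideal{\Rint{}}).$

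The main obstacle is showing this maximal element is a valuation ring. Given $x \in \res^{\times}$ with $x \notin \Rint{},$ I would prove $x^{-1} \in \Rint{}.$ Were $\mideal{\Rint{}}\Rint{}[x]$ a proper ideal of $\Rint{}[x],$ any maximal ideal $\mathfrak{M}$ containing it would give a localization $\Rint{}[x]_{\mathfrak{M}}$ that is a local subring of $\res$ strictly dominating $\Rint{}$ (since it contains $x$), contradicting maximality; hence there is a relation $1 = a_0 + a_1 x + \cdots + a_n x^n$ with $a_i \in \mideal{\Rint{}}.$ If also $x^{-1} \notin \Rint{},$ the symmetric argument yields $1 = b_0 + b_1 x^{-1} + \cdots + b_m x^{-m}$ with $b_j \in \mideal{\Rint{}}.$ Choose such relations with $n, m$ minimal and (without loss of generality) $n \geq m.$ Since $b_0 \in \mideal{\Rint{}},$ the element $1 - b_0$ is a unit of $\Rint{},$ so the second relation rearranges to $x^m = c_1 x^{m-1} + c_2 x^{m-2} + \cdots + c_m$ with $c_j = (1-b_0)^{-1} b_j \in \mideal{\Rint{}}.$ Substituting $a_n x^n = a_n x^{n-m} \cdot x^m$ in the first relation replaces the top-degree term by a polynomial of degree $\leq n-1$ whose coefficients remain in $\mideal{\Rint{}},$ producing a shorter relation of the same shape and contradicting minimality of $n.$ Thus $x^{-1} \in \Rint{},$ and the final algebraic manipulation in this last paragraph is the technical heart of the proof.
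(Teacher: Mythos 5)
The paper states this as a cited result (Theorem~3.1.1 of Engler--Prestel) and gives no proof, so there is nothing in the text to compare against; I will instead assess your argument on its own terms. Your proof is correct, and it is the standard Zorn's-lemma argument for Chevalley's extension theorem: localize at $\mathfrak{p}$, take a maximal element of the poset of local subrings of $\res$ dominating $\mbf{R}_{\mathfrak{p}}$, and show this maximal element must be a valuation ring. The one step worth flagging is that your claim that $\Rint{}[x]_{\mathfrak{M}}$ dominates $\Rint{}$ does require noting that $\mathfrak{M}\cap\Rint{}$ is a proper ideal containing $\mideal{\Rint{}}$ and hence equals $\mideal{\Rint{}}$ by locality of $\Rint{}$; with that observation the strict domination (via $x\in\Rint{}[x]_{\mathfrak{M}}\setminus\Rint{}$) is airtight. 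The degree-lowering manipulation at the end is exactly the classical trick (substituting $x^{m}=c_{1}x^{m-1}+\cdots+c_{m}$ with $c_{j}\in\mideal{\Rint{}}$ into the degree-$n$ relation after clearing the unit $1-b_{0}$), and the coefficients indeed stay in the maximal ideal, so the contradiction to minimality of $n$ is sound. No gaps.
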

\begin{cor} Let $\res_{2} / \res_{1}$ be any extension of fields, let $\Rint{1} \subset \res_{1}$ be any valuation ring. Then there is a valuation ring $\Rint{2} \subset \res_{2}$ such that $\Rint{2} \cap \res_{1} = \Rint{1} .$
\end{cor}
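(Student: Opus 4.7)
The plan is to deduce this directly from Chevalley's theorem by supplying the correct subring and prime ideal. First I would view $\Rint{1}$ as a subring of $\res_2$ via the inclusion $\res_1 \hookrightarrow \res_2$. By Proposition \ref{P:valring} I may fix a valuation $\nu_1$ of $\res_1$ with valuation ring $\Rint{1}$, and then set $\mideal{\Rint{1}} \eqdef \{x \in \Rint{1} : \nu_1(x) > 0\}$; this is a maximal, and hence prime, ideal of $\Rint{1}$. Applying Chevalley's theorem with $\mbf{R} = \Rint{1}$ and $\mathfrak{p} = \mideal{\Rint{1}}$ produces a valuation ring $\Rint{2} \subset \res_2$ together with a maximal ideal $\mideal{\Rint{2}}$ satisfying $\Rint{1} \subset \Rint{2}$ and $\mideal{\Rint{2}} \cap \Rint{1} = \mideal{\Rint{1}}$. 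I claim this $\Rint{2}$ is the required valuation ring.

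The containment $\Rint{1} \subset \Rint{2} \cap \res_1$ is immediate. For the reverse inclusion, suppose $x \in \Rint{2} \cap \res_1$ and, for contradiction, that $x \notin \Rint{1}$. Since $\Rint{1}$ is the valuation ring of $\nu_1$, this forces $\nu_1(x) < 0$, and therefore $\nu_1(x^{-1}) > 0$, so that $x^{-1} \in \mideal{\Rint{1}}$. By the contraction identity from Chevalley's theorem, $x^{-1} \in \mideal{\Rint{2}}$; combined with $x \in \Rint{2}$ this yields $1 = x \cdot x^{-1} \in \mideal{\Rint{2}}$, contradicting the properness of the maximal ideal $\mideal{\Rint{2}}$. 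Hence $\Rint{2} \cap \res_1 = \Rint{1}$, as required.

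The entire argument is a formal consequence of Chevalley's theorem together with the defining property of a valuation ring. The only substantive choice is to feed in the maximal ideal $\mideal{\Rint{1}}$ (rather than, say, the zero ideal), which is precisely what guarantees that elements of $\res_1 \setminus \Rint{1}$ cannot cross over into $\Rint{2}$. I anticipate no deeper obstacle.
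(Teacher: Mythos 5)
Your proof is correct, and it takes the same route the paper implicitly intends: the corollary appears immediately after Chevalley's theorem precisely because it is a formal consequence of it, and your choice of $\mbf{R}=\Rint{1}$, $\mathfrak{p}=\mideal{\Rint{1}}$ followed by the unit-versus-ideal contradiction is the standard derivation. One small remark: invoking Proposition \ref{P:valring} to produce $\nu_1$ is harmless but unnecessary, since ``$x\in\res_1\setminus\Rint{1}$ implies $x^{-1}$ is a non-unit of $\Rint{1}$'' already follows from $\Rint{1}$ being a valuation ring; everything else is as tight as it should be.
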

When dealing with algebraic extensions one has a little more information on the rank of the extension of the valuation. More precisely, we have:
\begin{thm}[Theorem 3.2.4, \cite{Enp}]
Suppose that $\res_{2}$ is an algebraic extension of the valued field $\res_{1}$ and $\Rint{2}$ is a valuation ring extending the valuation ring $\Rint{1} \subset \res_{1}.$  Then the following holds:
\ben
\item The group $\gm_{2} / \gm_{1}$ is a torsion group where $\gm_{i} = \res_{i}^{\times}/ \Rint{i}^{\times} \; (i=1,2).$
\item $\xbar{\res_{2}}$ is an algebraic extension of $\xbar{\res_{1}}.$
\een
\end{thm}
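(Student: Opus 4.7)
The plan is to prove both parts by exploiting a single valuation-theoretic principle: in any valued field, if $\sum_i x_i = 0$ is a nontrivial equation then the minimum of the $\nu(x_i)$ must be attained at least twice. Both assertions will follow by applying this principle to an algebraic relation over $\res_1$ satisfied by an element of $\res_2$, after a suitable normalization.

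For part (1), pick any $\gamma \in \gm_2$ and write $\gamma = \nu_2(a)$ for some $a \in \res_2^{\times}$. Algebraicity of $a$ over $\res_1$ gives a relation $\sum_{i=0}^{N} c_i a^i = 0$ with $c_i \in \res_1$ not all zero. After discarding the terms with $c_i = 0$, the minimum-attained-twice principle produces distinct indices $i \neq j$ with $\nu_2(c_i a^i) = \nu_2(c_j a^j)$, which rearranges to $(i-j)\gamma = \nu_2(c_j) - \nu_2(c_i)$. Since the embedding $f \colon \gm_1 \hookrightarrow \gm_2$ from Section \ref{Ss:extval} is order-preserving, the right-hand side lies in $f(\gm_1)$, so $(i-j)\gamma \in \gm_1$ and $\gamma$ is torsion modulo $\gm_1$.

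For part (2), let $a \in \Rint{2}$ be algebraic over $\res_1$ and fix a relation $\sum_{i=0}^N c_i a^i = 0$ with $c_i \in \res_1$, not all zero. The trick is to rescale so that all coefficients lie in $\Rint{1}$ with at least one being a unit: choose $j$ with $\nu_1(c_j) = \min_i \nu_1(c_i)$ and set $d_i = c_i/c_j$. Then $d_i \in \Rint{1}$ and $d_j = 1$, so the reduced polynomial $\xbar{g}(x) = \sum_i \xbar{d_i}\, x^i \in \xbar{\res_1}[x]$ is nonzero because its $j$-th coefficient equals $1$. Since $a \in \Rint{2}$, the rescaled relation $\sum_i d_i a^i = 0$ lies in $\Rint{2}$, and applying the residue map yields $\xbar{g}(\xbar{a}) = 0$ in $\xbar{\res_2}$; hence $\xbar{a}$ is algebraic over $\xbar{\res_1}$.

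I expect no serious obstacle. The only bookkeeping points are that $f$ is order-preserving, so minima computed in $\gm_1$ and $\gm_2$ agree on elements coming from $\res_1$, and that every term of the rescaled relation in (2) lands inside $\Rint{2}$ so that reduction modulo $\mideal{2}$ is legitimate; both are automatic once one normalizes by the coefficient of smallest valuation.
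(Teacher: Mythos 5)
The paper does not prove this result; it is quoted as Theorem~3.2.4 of Engler--Prestel \cite{Enp} and used as a black box to conclude that the rank of a valuation is preserved under algebraic extension. So there is no ``paper's own proof'' to compare with. That said, your argument is exactly the standard textbook proof (and is essentially the one in \cite{Enp}), and it is correct.

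For part (1), your reduction to an algebraic relation $\sum_i c_i a^i = 0$ with nonzero $c_i$ and the observation that the minimum of $\nu_2(c_i a^i) = \nu_2(c_i) + i\gamma$ must be attained at two distinct indices $i \neq j$ gives $(i-j)\gamma = \nu_2(c_j) - \nu_2(c_i)$, and since $c_i, c_j \in \res_1^\times$ the right-hand side lies in the image of $\gm_1$ under the order-preserving embedding from Section~\ref{Ss:extval}. One small point worth spelling out: after discarding zero coefficients, at least two nonzero terms remain (if only one survived, $c_i a^i = 0$ would contradict $a \neq 0$), so the two-index principle indeed applies.

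For part (2), the normalization $d_i = c_i/c_j$ with $j$ chosen to minimize $\nu_1(c_i)$ guarantees $d_i \in \Rint{1} \subset \Rint{2}$ and $d_j = 1$, so the entire relation $\sum_i d_i a^i = 0$ lies in $\Rint{2}$ and reduces modulo $\mideal{2}$ to $\xbar{g}(\xbar{a}) = 0$ with $\xbar{g} \neq 0$. Since a nonzero polynomial that vanishes at $\xbar{a}$ cannot be a nonzero constant, $\xbar{g}$ has positive degree and $\xbar{a}$ is algebraic over $\xbar{\res_1}$. No gaps.
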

\begin{cor}
The rank of extended valuation remains the same for algebraic extensions of fields.
\end{cor}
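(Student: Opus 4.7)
The plan is to exploit the first conclusion of the preceding theorem---that $\gm_2/\gm_1$ is torsion---to set up a bijection between the lattice of convex subgroups of $\gm_2$ and that of $\gm_1$ which preserves strict inclusions. Since the paper defines the rank as the maximal length of a chain of distinct proper convex subgroups, such a bijection immediately forces the two ranks to agree.

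Concretely, I would consider the map $\Phi$ sending a convex subgroup $\Delta \subset \gm_2$ to $\Delta \cap \gm_1$, which is visibly convex in $\gm_1$. The central observation is that $\Phi$ strictly preserves inclusions. Indeed, suppose $\Delta_1 \subsetneq \Delta_2$ are convex subgroups of $\gm_2$, and pick $\delta \in \Delta_2 \setminus \Delta_1$, which after possibly negating we may assume satisfies $\delta \succeq 0$. Because $\gm_2/\gm_1$ is torsion, some positive multiple $n\delta$ lies in $\gm_1$, and hence in $\Delta_2 \cap \gm_1$. If the intersection $\Delta_2 \cap \gm_1$ equaled $\Delta_1 \cap \gm_1$, then $n\delta \in \Delta_1$, and convexity applied to the inequality $0 \preceq \delta \preceq n\delta$ would force $\delta \in \Delta_1$, contradicting the choice. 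Since the lattice of convex subgroups is linearly ordered by inclusion, this yields injectivity of $\Phi$.

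For surjectivity, given a convex subgroup $\Delta' \subset \gm_1$, I would construct a preimage by taking the convex hull of $\Delta'$ in $\gm_2$: define $\Delta = \{\delta \in \gm_2 : -\delta' \preceq \delta \preceq \delta' \text{ for some } \delta' \in \Delta' \text{ with } \delta' \succeq 0\}$. Closure under negation is built into the definition, closure under addition follows from a triangle-type inequality together with $\Delta'$ being a subgroup, and convexity is immediate. The equality $\Phi(\Delta) = \Delta \cap \gm_1 = \Delta'$ reduces, for the non-trivial inclusion, to convexity of $\Delta'$ inside $\gm_1$.

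Putting this together, $\Phi$ is an order-preserving bijection between the convex subgroups of $\gm_2$ and those of $\gm_1$ that sends $\{0\}$ to $\{0\}$, $\gm_2$ to $\gm_1$, and strict inclusions to strict inclusions. Chains of distinct proper convex subgroups therefore correspond bijectively on the two sides with lengths preserved, so the ranks coincide. The main technical point---really the only non-formal step---is the torsion-quotient argument used for injectivity; once that is in place, the bijection and its rank-preservation are essentially by inspection.
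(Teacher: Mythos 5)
The paper states this corollary without proof, treating it as an immediate consequence of part (1) of the preceding theorem (that $\Gamma_{2}/\Gamma_{1}$ is torsion). Your argument correctly fills in that gap: the intersection map $\Delta \mapsto \Delta \cap \Gamma_{1}$ between convex subgroups is a strict-inclusion-preserving bijection precisely because the quotient is torsion, and since the rank is defined as the maximal length of a chain of proper convex subgroups, equality of ranks follows. The two pivotal checks---injectivity via convexity applied to $0 \preceq \delta \preceq n\delta$ with $n\delta \in \Gamma_{1} \cap \Delta_{2}$, and surjectivity via the convex hull of $\Delta'$ inside $\Gamma_{2}$---are both sound, so the proof is complete and is the natural argument; there is no proof in the paper to compare it against.
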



\section{Higher dimensional local  fields} \label{Sec:Higherlocalfields}
\begin{defn}\label{D:hloc} A \ndim{n} local field over a field $\res$ is an ordered sequence of fields, \linebreak[3] $(\Kn{0}, \Kn{1}, \ldots, \Kn{n-1}, \Kn{n})$  such that :
\ben [1.]
\item Each $\Kn{i},\; 1 \leq i \leq n $ is a complete with respect to a discrete rank one valuation $w_{(i)} :\nob{ \Kn{i}} \rightarrow \Z $ (i.e. each $\Kn{i}$ is a local field). 
\item $\Kn{i}$ is the residue field of $\Kn{i+1}$ for all $i \geq 0.$
\item $\Kn{0} = \res$

\een
\end{defn} 
\begin{rem}
\ben[(i)] \label{C:convention}
\item We will use the phrase ``$\K$ is a \ndim{n} local field" when the underlying sequence of fields  \linebreak[2] $ (\Kn{0}, \ldots, \Kn{n-1}, \Kn{n})$ is clear from the context and $\K = \Kn{n}.$ 
\item The valuation ring of $\Kn{i}$ with respect to $w_{(i)}$ will be always denoted by $\Rint{\Kn{i}}.$ The maximal ideal of  $\Rint{\Kn{i}}$ is denoted by $\mideal{\Kn{i}}.$ 
\item Henceforth, we will always assume that each $w_{(i)}$ is a nontrivial (and \noarch) valuation (i.e. $w_{(i)} \not\equiv 0$).
\een
\end{rem}
\begin{exmple}[Equal Characteristic] 
Given any field $\res,$ consider $\Kn{i} \eqdef \linebreak[4] \res((t_{1})) \ldots ((t_{i})),$ complete with respect to the valuation $ord_{t_{i}}: \nob{\Kn{i}} \rightarrow \Z.$ Then $\Kn{n}$ is a \ndim{n} local field over $\res.$ 
\end{exmple}
 \begin{exmple}[Unequal Characteristic]
If $\K$ is any field complete with respect to a discrete valuation $w: \K \longrightarrow \Zlat{} $ consider the field
 \[
  \K  \{ t \} \eqdef  \left\{ \sum_{ i = - \infty }^{\infty} a_{i} t^{i} : a_{i} \in \K ,\; \inf_{i \in \Z} \{w(a_{i})\} \gg -\infty \; \mbox{ and }  w(a_{i}) \rightarrow \infty \; \mbox{ as } i \rightarrow -\infty \right \}
\]
Our assumptions on the growth conditions of the coefficients make the algebraic addition and product 
(\emph{usual power series operations}) are well defined.

Now define a discrete valuation $\nu$ on $ \K  \{ t \}$ (extending $w$) as follows :
 \[\nu(\sum_{ i = - \infty }^{\infty} a_{i} t^{i}) = \inf_{i \in \Z}\{ w(a_{i})\}\]
  Then  $ \K  \{ t \}$  is complete with respect to the valuation $\nu.$  
  
If moreover $\res $ is the residue field of $\K$ (with respect to the valuation $w$) then the residue field of $ \K  \{ t \}$ is $\res((t)).$ Note that char($\res$) = char($\res((t))$) and in general it is not equal to char($\K\{ t\}$). 
\end{exmple}
The structure theorem for higher local fields is as follows:
 \begin{thm}[\cite{Zhukov2}]
 Let $\K $ be an \ndim{n} local field. In case, char$(\K) = 0$ and char$(\Kn{0}) = p$, denote by $k_{0}\hookrightarrow \K $, the quotient field of $W(\Kn{0})$ (the Witt ring of $\Kn{0}$, see \cite{Serre}). Then $\K$ is one of the following fields.
 \ben[(a)]
 \item If  char($\K) = \text{ char}(\Kn{0}) $, then $\K$  is isomorphic to $\Kn{0}((t_{1}))\ldots((t_{n})).$
 \item If  char$(\Kn{m}) = p$ and char$(\Kn{m+1}) =0$, $ m \geq 1$, Then $\K$ is a finite totally ramified extension of the the field $k\{\{t_{1}\}\}\ldots \{\{t_{m}\}\}((t_{m+1})) \ldots ((t_{n}))$ where $k$ is a finite extension of $k_{0}.$
 \item If  char$(\Kn{1}) = 0 $ and char$(\Kn{0}) = p$ then $\K$ is isomorphic to $ k((t_{1}))\ldots((t_{n-1}))$ where $ k$ is a finite extension of $k_{0}.$
 
 \een
 \end{thm}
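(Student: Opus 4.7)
The plan is to proceed by induction on the dimension $n$, combining the classical Cohen--Teichm\"uller--Witt structure theorem for complete discretely valued fields with the $\K\{t\}$ construction introduced just above. The base case $n = 1$ is the standard local-field classification: in equal characteristic $\K \cong \Kn{0}((t_1))$, and in mixed characteristic $(0,p)$ the field $\K$ is a finite totally ramified extension of the fraction field of $W(\Kn{0})$; these yield cases (a) and (c) respectively, while (b) is vacuous for $n = 1$.

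For the inductive step I would regard $\K = \Kn{n}$ as a complete rank-one discretely valued field over its residue field $\Kn{n-1}$, apply the inductive hypothesis to $\Kn{n-1}$, and then apply the one-dimensional classification once more, this time to $\K$ itself. If $\mathrm{char}\,\K = \mathrm{char}\,\Kn{n-1}$, then $\K \cong \Kn{n-1}((t_n))$; substituting each of the three possible forms for $\Kn{n-1}$ simply appends one more Laurent variable and reproduces (a), (b), or (c) without moving the transition index $m$. If $\mathrm{char}\,\K \ne \mathrm{char}\,\Kn{n-1}$, the characteristic jump happens at the top, so $\Kn{n-1}$ must itself be of equal-characteristic type (a) with $\mathrm{char} = p$, and $\K$ lands in case (b) with $m = n-1$ (or case (c) when $n = 1$). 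The embedded coefficient subfield $k$ in the mixed-characteristic cases is produced, as classically, by lifting $W(\Kn{0})$ into $\K$ via Hensel's lemma.

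The main obstacle is the mixed-characteristic lift at an intermediate level in case (b), namely the construction of the subfield $k\{\{t_1\}\}\ldots\{\{t_m\}\}$. The ordinary Witt construction only lifts \emph{perfect} fields of characteristic $p$ to complete DVRs of characteristic zero, but here the residue field at the characteristic jump has the form $\kappa((t_1))\ldots((t_m))$ and is visibly imperfect; at the same time ordinary power series $F[[t]]$ fail because they do not enlarge the residue field at all. The $F\{t\}$ construction just defined is designed precisely to fill this gap: a direct check, using the growth conditions in its definition to ensure convergence of the product, shows that $F\{t\}$ is a complete DVR of characteristic zero with residue field $\kappa((t))$ whenever $F$ is so with residue field $\kappa$. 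Iterating $m$ times starting from $F = k$ then produces the required canonical subfield, and the assertion that $\K$ is a finite totally ramified extension of it follows stage by stage from the one-dimensional Cohen/Witt theorem applied to each inclusion $\Kn{i} \subset \Kn{i+1}$, together with the standard fact that two complete DVRs sharing a residue field and a uniformizer differ only by finite ramification.
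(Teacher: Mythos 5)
The paper itself offers no proof of this theorem; it is imported wholesale from Zhukov \cite{Zhukov2}, so there is no internal argument to compare against. Your outline is a reasonable sketch of the standard induction, but as written it has two genuine gaps.

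First, your inductive step in the equal-characteristic branch quietly assumes that ``totally ramified'' is preserved by appending a Laurent variable, and this is false for the most natural reading of the term. Suppose $\Kn{n-1}$ is of type (b), i.e.\ a finite totally ramified extension of $E := k\{\{t_1\}\}\ldots\{\{t_m\}\}((t_{m+1}))\ldots((t_{n-1}))$, and $\K \cong \Kn{n-1}((t_n))$. The extension $\Kn{n-1}((t_n)) / E((t_n))$ has ramification index $1$ at the $t_n$-adic valuation (both fields have $t_n$ as a uniformizer) and residue extension $\Kn{n-1}/E$, so at the top rank-one level it is \emph{unramified}, not totally ramified. The assertion survives only if ``totally ramified'' is interpreted for $n$-dimensional local fields as ``the last residue field $\Kn{0}$ does not grow,'' i.e.\ the product of all $n$ ramification indices equals the degree. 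That is in fact the notion Zhukov uses, but you never state it, and with the naive rank-one reading your induction would break at exactly this step. You need to fix the definition before the passage from $\Kn{n-1}$ to $\K$ can be justified.

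Second, the mixed-characteristic step is where essentially all of the work in Zhukov's theorem lives, and your proposal replaces it with a gesture. Producing the field $k\{\{t_1\}\}\ldots\{\{t_m\}\}$ as an abstract complete DVR with residue field $\kappa((t_1))\ldots((t_m))$ is indeed a ``direct check'' from the growth conditions, but the theorem requires an \emph{embedding} of this field into $\K$ compatible with the residue structure, and then one must show $\K$ is finite over the image. ``Lifting $W(\Kn{0})$ into $\K$ via Hensel's lemma'' handles only the coefficient subfield $k_0$; one must also lift the uniformizers $t_1,\ldots,t_m$, verify that the resulting subring is complete and genuinely isomorphic to $k_0\{\{t_1\}\}\ldots\{\{t_m\}\}$ (which uses $p$-adic convergence estimates, not just Hensel), and possibly replace $k_0$ by a finite extension $k$ to kill any residual unramified part. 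Your closing invocation of ``two complete DVRs sharing a residue field and a uniformizer differ only by finite ramification'' is a true statement, but it applies to an \emph{embedded} subring with comparable valuations; constructing that subring is precisely the missing content. Relatedly, the phrase ``$\Kn{i} \subset \Kn{i+1}$'' is not meaningful as stated: $\Kn{i}$ is the residue field, not a subfield, of $\Kn{i+1}$, and a coefficient-field section exists only in the equal-characteristic case.
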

 \subsection{Algebraic Extensions}
 Let $\K$ be an \ndim{n} local field and $\Lf$ be a finite field extension of $\K$. Recall by the conventions in \ref{C:convention}, we will interchangeably use $\K$ and $\Kn{n}.$   By \cite[Ch2, Prop 3]{Serre}, $\Lf$ is a complete with respect to a (non-trivial) discrete valuation, $w^{\Lf}_{n}.$ Moreover from the same theorem, we have the valuation ring  of $\Lf$ with respect to $w^{\Lf}_{n}$, denoted by $\Rint{\Lf}$ is a free module of finite rank over $\Rint{\Kn{n}}.$ As a result the residue field of $\Lf $ denoted by $\Ln{n-1}$ is again finite dimensional field extension of $\Kn{n-1}.$\\
 Using the aforementioned theorem repeatedly, we see that $\Lf$ itself becomes a \ndim{n} local field with each $\Ln{i}$ a finite extension of $\Kn{i}.$ We say that the structure of $\Lf$ as a \ndim{n} local field is compatible with that of $\K.$
 \subsection {A rank n valuation on $\K$} 
 Let us consider the following subsets of a \ndim{n} local field $\K$
\begin{eqnarray}
 P & = &  \lbrace f \in \K : w_{(n)}(f) \geq 1 \rbrace \nonumber \\
 Q_{i} & = & \lbrace f \in \K : w_{(n)} (f) = \ldots = w_{(n-i+1)} (f) = 0 \mbox{ and } w_{(n-i)}(f) \geq 1   \rbrace \nonumber \\
 & & \mbox{for }  1 \leq i  \leq n-1 \nonumber \\
 R & = & \lbrace f \in \K :  w_{(n)} (f) = \ldots = w_{(n-i+1)} (f) = w_{(1)}(f) = 0 \rbrace \nonumber \\
 \text{ Let} \; \Rint{\nu}^{\K} & \eqdef  & P \cup Q_{i} \cup R \nonumber
   \end{eqnarray} 
Then $\Rint{\nu}^{\K}$ is a commutative ring with one. 
\begin{lema}
$\Rint{\nu}^{\K}$ is in fact a valuation ring (see section \ref{Sec:Val} for definition).
Let  $\displaystyle \Gamma^{\K} \eqdef \nob{\K}/ \no{\Rint{\nu}^{\K}}.$ 
Then $ \displaystyle \Gamma^{\K}   \cong \Z^{n}.$ Here $\no{\Rint{\nu}^{\K}}$ is the set of all elements in $\Rint{\nu}^{\K}$ whose inverses also belong to the same ring. 
\end{lema}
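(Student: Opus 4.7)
The plan is to exhibit a rank $n$ valuation $\nu : \nob{\K} \to \Z^n$ (with the lexicographic order on $\Z^n$) whose associated valuation ring is exactly $\Rint{\nu}^{\K}$, and then read off the quotient group as $\Z^n$. The construction of $\nu$ is iterative, mirroring the hierarchy $\Kn{n} \supset \Rint{\Kn{n}} \twoheadrightarrow \Kn{n-1} \supset \cdots$ that defines a higher local field.

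I would construct $\nu$ as follows. Fix a uniformizer $\pi_i \in \nob{\Kn{i}}$ for each $1 \le i \le n$. Given $f \in \nob{\K} = \nob{\Kn{n}}$, set $a_n = w_{(n)}(f)$; then $f \pi_n^{-a_n} \in \Rinv{\Kn{n}}$ reduces to a nonzero element $\bar{f}^{(n-1)} \in \nob{\Kn{n-1}}$. Set $a_{n-1} = w_{(n-1)}(\bar{f}^{(n-1)})$, divide by $\pi_{n-1}^{a_{n-1}}$, reduce to $\nob{\Kn{n-2}}$, and iterate. This produces a well-defined tuple $\nu(f) = (a_n, a_{n-1}, \ldots, a_1) \in \Z^n$, independent of the choice of uniformizers once one checks that a different choice only changes the intermediate unit being reduced.

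To verify $\nu$ is a valuation, multiplicativity $\nu(fg) = \nu(f) + \nu(g)$ is immediate by induction on $n$, since reduction is multiplicative on units. For the ultrametric inequality $\nu(f+g) \succeq \min\{\nu(f), \nu(g)\}$, I would induct on $n$ and split into three cases. If $w_{(n)}(f) \neq w_{(n)}(g)$, the first coordinate of $\nu(f+g)$ equals the smaller of the two and the later coordinates match those of the minimizer, so the inequality is automatic. If $w_{(n)}(f) = w_{(n)}(g) = c$ and $\overline{f \pi_n^{-c}} + \overline{g \pi_n^{-c}} \neq 0$ in $\nob{\Kn{n-1}}$, the first coordinate of $\nu(f+g)$ is still $c$ and the later coordinates are obtained by the analogous construction in $\Kn{n-1}$, so the inequality follows by the inductive hypothesis. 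If the reductions cancel, then $w_{(n)}(f+g) > c$, so the first coordinate of $\nu(f+g)$ strictly exceeds the minimum. By construction, $\{f \in \nob{\K} : \nu(f) \succeq 0\} \cup \{0\}$ coincides exactly with $P \cup Q_1 \cup \cdots \cup Q_{n-1} \cup R$, so $\Rint{\nu}^{\K}$ is the valuation ring of $\nu$.

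For the quotient, observe that $\ker \nu$ is exactly the set of $f$ whose iterated reductions are units at each stage down to $\nob{\Kn{0}}$, which is precisely $\no{\Rint{\nu}^{\K}}$. Thus $\nu$ induces an injection $\Gamma^{\K} \hookrightarrow \Z^n$. Surjectivity follows at once by computing $\nu$ on lifts $\tilde{\pi}_i \in \nob{\K}$ of the chosen uniformizers $\pi_i \in \nob{\Kn{i}}$: by construction $\nu(\tilde{\pi}_i)$ is the $i$-th standard basis vector of $\Z^n$. The main technical point is the ultrametric case analysis, specifically tracking how additive cancellation at one level propagates to valuations at deeper residue fields; this is what forces the induction on $n$ and the lexicographic order, but it is handled cleanly once the iterative definition of $\nu$ is in place.
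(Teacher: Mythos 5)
Your proof is correct but takes a genuinely different route. The paper shows $\Rint{\nu}^{\K}$ is a valuation ring directly (by the dichotomy that for any $f \in \nob{\K}$, either $f$ or $f^{-1}$ lands in $P \cup Q_i \cup R$), and then invokes the general fact (Proposition \ref{P:valring}) that a valuation ring automatically yields a valuation on $\nob{\K}/\no{\Rint{\nu}^{\K}}$; it then determines the group structure of $\Gamma^{\K}$ abstractly, by putting a descending filtration $S^{\bullet}$ on $\nob{\K}$, passing to the induced filtration on $\Gamma^{\K}$, and observing that each graded piece is $\Z$, so $\Gamma^{\K} \cong \Z^n$ (the lexicographic order then falls out of the filtration). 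You instead construct the map $\nu : \nob{\K} \to \Z^n$ explicitly via iterated division by uniformizers and reduction, verify the valuation axioms directly including the ultrametric inequality by a three-way case analysis, and only afterwards identify the valuation ring as $\Rint{\nu}^{\K}$ and the kernel as $\no{\Rint{\nu}^{\K}}$. What your approach buys is concreteness: it produces the actual map and makes the role of the local system of parameters transparent (the paper only mentions this in a subsequent remark). What the paper's approach buys is economy: by routing through Proposition \ref{P:valring}, it never has to check the ultrametric inequality by hand, and the filtration argument packages the inductive structure more cleanly.

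One claim in your write-up is false, though it does not damage the argument: the tuple $\nu(f) = (a_n, \ldots, a_1)$ is \emph{not} independent of the choice of uniformizers. For instance, over $\C((t_1))((t_2))$, taking $f = t_2$, the choice $\pi_2 = t_2$ gives $\nu(f) = (1, 0)$, while $\pi_2' = t_1 t_2$ gives $\nu(f) = (1, -1)$: rescaling a uniformizer by a unit $u$ whose residue is not itself a unit one level down perturbs the deeper coordinates. What is choice-independent is the valuation \emph{ring} $\{f : \nu(f) \succeq 0\}$ (because when $a_n = 0$ no division by $\pi_n$ happens, and so on down), and hence the ordered group $\Gamma^{\K}$ up to order isomorphism. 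Since your argument only uses a single fixed choice of uniformizers to build $\nu$ and then reads off the valuation ring, the kernel, and surjectivity, the false independence claim is a removable remark rather than a load-bearing step; but you should delete it, or replace it with the weaker (and true) statement that the induced valuation ring and quotient are independent of the choice.
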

 \begin{proof}
 If $f \in \nob{\K}$ then (replacing $f$ with $f^{-1}$ if  necessary) we always have $w_{(n)} (f) = \ldots = w_{(n-i+1)} (f)= 0 $ and $w_{(n-i)}(f) \geq 0 $ as each $\Rint{\Kn{i}}$ is a valuation ring. This proves the first assertion.

 Now, consider the following descending filtration on $\nob{\K}$  
 \begin{eqnarray*}
 S^{0}(\K) & = & \K^{\times}  \nonumber \\
 S^{1}(\K)  &= & \lbrace f \in \K^{\times} : w_{(n)} (f)  \rbrace \nonumber \\
 S^{2}(\K)  &= & \lbrace f \in \K^{\times} : w_{(n)} (f) = w_{(n-1)} (f)= 0   \rbrace \nonumber \\
 & \vdots &  \nonumber \\
S^{n}(\K)  &= & \lbrace f \in \K^{\times} : w_{(n)} (f) = \ldots = w_{(1)} (f)= 0   \rbrace . \nonumber \\
 \end{eqnarray*}
 
 The induced filtration  on $ \Gamma^{\K}$, is given by 
 \[  F^{i}(\Gamma^{\K})  = S^{i} (\K) / (S^{i}(\K) \cap  \no{\Rint{\nu}^{\K}}).
\]
 We then have $F^{i}(\K) / F^{i+1}(\K) \cong \Z.$
 
 Let us consider now an increasing filtration $G_{i}(\Gamma^{\K})$ defined by
 \[ G_{i}(\Gamma^{\K}) = F^{n-i}(\Gamma^{\K}). \]
 We now pass to the associated graded construction and we have the required isomorphism  $G_{i}(\Gamma^{\K}) \cong \Z^{n}.$
 \end{proof}
 
 \begin{cor}
 If we totally order $\Gamma^{\K}$ by transporting the lexicographic order from $\Z^{n}$, using the above isomorphism (with respect to the filtration $G_{\bullet}$),  then $\displaystyle \nu: \nob{\K}\rightarrow  \Gamma^{\K}$ is a valuation map.
 \end{cor}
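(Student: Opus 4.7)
The plan is to reduce the claim to Proposition \ref{P:valring}. The previous lemma established that $\Rint{\nu}^{\K}$ is a valuation ring of $\K$, so Proposition \ref{P:valring} already supplies a canonical ordering on $\Gamma^{\K} = \nob{\K}/\no{\Rint{\nu}^{\K}}$ under which the quotient homomorphism $\nu$ is a valuation whose valuation ring is $\Rint{\nu}^{\K}$. It therefore suffices to verify that this canonical ordering coincides with the lexicographic ordering transported from $\Z^{n}$ via the isomorphism $\Gamma^{\K} \cong \Z^{n}$ coming from the filtration $G_{\bullet}$.

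Since both orderings are total on $\Gamma^{\K}$, it is enough to show that they share the same positive cone. By Proposition \ref{P:valring}, the canonical positive cone is $\nu(\Rint{\nu}^{\K} \setminus \{0\})$. The lexicographic positive cone in $\Z^{n}$, under the convention of Example \ref{I:one} in which the last coordinate dominates, consists of those tuples $(a_{1}, \ldots, a_{n})$ whose top-most nonzero entry is positive, together with the zero tuple.

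To match the two, I would unwind the graded isomorphism coordinate by coordinate. The $n$-th coordinate of the image of $[f] \in \Gamma^{\K}$ in $\Z^{n}$ is $w_{(n)}(f)$; when this vanishes, the $(n-1)$-st coordinate equals $w_{(n-1)}$ applied to the reduction of $f$ to $\Kn{n-1}^{\times}$; and in general the $k$-th coordinate is $w_{(k)}$ applied to the iterated residue of $f$ in $\Kn{k}^{\times}$, provided all higher $w_{(j)}$'s vanish. Granted this identification, the piecewise definition $\Rint{\nu}^{\K} = P \cup \bigcup_{i=1}^{n-1} Q_{i} \cup R$ matches the stratification of the lexicographic positive cone by the position of its first nonzero coordinate from the top: $P$ corresponds to $a_{n} \geq 1$, each $Q_{i}$ to $a_{n} = \cdots = a_{n-i+1} = 0$ with $a_{n-i} \geq 1$, and $R$ to the zero tuple. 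The main technical point is the unwinding above, which I expect to handle by induction on $n$ using that $\Kn{k-1}$ is the residue field of $\Kn{k}$; once the coordinates are identified with iterated $w_{(k)}$'s, the equality of positive cones is immediate by inspection and the corollary follows.
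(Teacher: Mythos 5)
Your proposal is correct and follows the approach the paper clearly intends (the paper itself gives no proof, treating the statement as an immediate consequence of the lemma together with Proposition~\ref{P:valring}). You correctly reduce to Proposition~\ref{P:valring}, observe that it suffices to match positive cones, and identify the $k$-th coordinate of $[f]$ under the graded isomorphism with $w_{(k)}$ applied to the iterated residue of $f$, from which the identification of $P$, $Q_i$, $R$ with the strata of the lexicographic cone is immediate. One minor remark worth making explicit in a polished write-up: the filtration $G_\bullet$ determines the transported ordering independently of the choice of splitting $\Gamma^{\K}\cong\Z^{n}$, since a lexicographic ordering is determined by the chain of convex subgroups $G_0\subset\dots\subset G_n$ together with the canonical orderings on the graded pieces $G_i/G_{i-1}\cong\Z$ induced by the valuations $w_{(i)}$; your coordinate-by-coordinate unwinding is exactly this observation in concrete form.
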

 \begin{rem}
In literature, the valuation map is concretely realized by using the concept of local system of parameters.  Let $\pi_{i} $ be any uniformizing element of $\Kn{i}$ (i.e. $w_{(i)}(\pi_{i}) = 1$). \\
A set $\mbf{t}_{\K} \eqdef (t_{1}, t_{2}, \ldots, t_{n})$ where $t_{n} = \pi_{n}$ and $t_{i} \in \no{\Rint{\Kn{n}}}$ is an arbitrary lift of $\pi_{i}$ for $i \leq n$ is called a \textbf{local system of parameters} of $\K.$\\
The valuation  is then defined in terms of successive reduction to the residue field by using these local parameters, see   \cite{Zhukov1, Zhukov11} for details.
 \end{rem} 
  If  $\Lf$ is any finite algebraic extension of $\K$, then $\Rint{\nu}^{\Lf}$ is the valuation ring over $\Rint{\nu}^{\K}.$ So we have the following commutative diagram
 \[
  \xymatrix{
  \nob{\K} \ar[d] \ar[r]^{\nu} & \Gamma ^{\K} \ar[d] \\
  \nob{\Lf} \ar[r]_{\nu} & \Gamma^{\Lf}
   }
  \] 
  
 Moreover the filtration $G_{\bullet}$ considered above has the property that $G_{i}(\Lf)/ G_{i}(\K)$ is finite abelian group for all $i \geq 0.$
 Passing to $\K^{al}$, the algebraic closure of $\K$, we get a map $\nu : \no{\K^{al}} \rightarrow  \Gamma^{\K}_{\Q}$ , where $\Gamma^{\K} _{\Q} \eqdef \varinjlim{\Gamma^{\Lf}}$ over all finite extensions. 
 
 It is well known, that $\Gamma^{\K} _{\Q}$ is divisible and hence a $\Q$ module. It contains $\Gamma^{\K}$ as a $\Z$ lattice. Let  $\Gamma^{\K} _{\R} \eqdef \Gamma^{\K} _{\Q} \otimes_{\Q} \R.$ It is an \ndim{n} vector space with a given isomorphism with $\R^{n}.$ Under this isomorphism, the lexicographic order on $\R^{n}$ induces a total ordering on $\Gamma^{\K} _{\R}.$
 
We will refer to the map $\displaystyle \nu :\no{\K^{al}} \rightarrow \Gamma^{\K}_{\R}$, and its restriction to sub-fields  as the valuation map. When the underlying field is clear from context, it will almost always be denoted by $\K$, we will for brevity, write $\Gamma_{\R}$ instead of $\Gamma^{\K}_{\R}.$
\section{Newton Polytopes} \label{Newton}
\begin{notn} 
\ben[(a)]
\item For any $\mbf{d} \in \Z^{n}$ we define a monomial $\mbf{x}^{\mbf{d}} \eqdef \prod_{i} x_{i}^{d_{i}}.$ 
\item For any field $\res$ and a polynomial  $f(\mbf{x}) = \sum_{\Z^{n}}\; a_{\mbf{d}}\mbf{x}^{\mbf{d}},$ the support of $f$ is defined as the set  Supp($f$)$ \eqdef \{ \mbf{d} \in \Z^{n} :  a_{\mbf{d}} \neq 0\}$ .
\item For any subset $S$ of a real vector space we denote the convex hull of $S$ by Conv$\{S \}.$

\item  If $\mbf{x} = (x_{0}, \ldots, x_{n})$ and $\mbf{y} = (y_{0}, \ldots, y_{n})$ denote two points in $\R^{n+1}$ such that $x_{0} \neq y_{0}$ we define the generalized slope of the vector $\overrightarrow{\mbf{xy}}$  as the vector 
\[m(\overrightarrow{\mbf{xy}}) \eqdef (y_{1}- x_{1}/ y_{0} - x_{0}, \ldots, y_{n}- x_{n}/ y_{0} - x_{0})  \in \R^{n}\]
\een
\end{notn}
If $f(\mbf{x}) = \sum_{\Z^{n}}\; a_{\mbf{d}}\mbf{x}^{\mbf{d}}$ is a any polynomial over a local field $\res$ with a valuation $w: \res \rightarrow \Z,$ in \cite{EKL} the \textit{extended Newton polytope} of $f$ was defined as follows:
\[
N_{1}(f) = \text{Conv}\{ (\mbf{d}, u): \mbf{d} \in \text{Supp}(f) \text{ and } u \geq w(a_{\mbf{d}}) \} 
\]
Now suppose $f(\mbf{x})$ is a polynomial (with the same explicit form as above) with coefficients in $\K,$ a \ndim{n} local field. We define the \textit{n- extended Newton polytope} of $f$ as follows:
\[
N_{n}(f) \eqdef \text{ Conv} \{ (\mbf{d}, j_{1},\ldots, j_{n}) :  \mbf{d} \in \text{ Supp}(f) ; \; j_{s} \geq \nu_{s}(a_{\mbf{d}}), 1 \leq s \leq n \} 
\]\label{NP}
Here $\nu : \no{\K^{al}} \rightarrow \Gamma_{\R}$  is the map constructed before and $\nu_{s}$ is the $s -$ coordinate of $\nu.$

 \begin{thm} \label{T:Newton}
 Suppose $f(t)$ is a polynomial in one variable over $\K$ given by
 \[
 f(t)  = 1+ \sum_{i=1} ^{n} a_{i} t^{i} .
 \]  Let,
\begin{eqnarray} \label{eq:E1}
 f(t) = \prod_{i=1}^{n} (1- t/ \alpha_{i}) \; \mbox{with possibly repeated roots in } \K^{al}.
 \end{eqnarray}
 Let $\nu_{1} > \nu_{2} > \ldots > \nu_{m}$ be $m$ points in $\Gamma_{\R}$ with the decreasing order, such that exactly $k_{1}$ roots have valuation $\nu_{1},$ $k_{2}$ roots have valuation $\nu_{2}$ and so on (i.e. $k_{1} + k_{2} + \ldots+ k_{m} = n$). \\
 Then $N_{n}(f)$ has exactly $m$ bounded edges with generalized slopes $- \nu_{i}.$ 
 \end{thm}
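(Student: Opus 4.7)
The plan is to compute the valuations $\nu(a_k)$ directly from the factorization (\ref{eq:E1}) and then read off the bounded edges of $N_n(f)$. Expanding
\[f(t) = \prod_{i=1}^n (1 - t/\alpha_i) = 1 + \sum_{k=1}^n (-1)^k e_k(\alpha_1^{-1}, \ldots, \alpha_n^{-1})\, t^k,\]
I obtain $a_k = (-1)^k e_k(\alpha_1^{-1}, \ldots, \alpha_n^{-1})$, where $e_k$ is the $k$-th elementary symmetric polynomial. After relabeling the roots, I may assume $\nu(\alpha_i) = \nu_j$ whenever $K_{j-1} < i \leq K_j$, where $K_j := k_1 + \cdots + k_j$, so that $-\nu_1 <_{\mathrm{lex}} \cdots <_{\mathrm{lex}} -\nu_m$, with $K_0 = 0$ and $K_m = n$.

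The central estimate is that $\nu(a_{K_j}) = -\sum_{l \leq j} k_l \nu_l$ exactly at the break points, together with the lex-inequality $\nu(a_k) \geq -\sum_{l<j} k_l \nu_l - (k - K_{j-1})\nu_j$ for $K_{j-1} \leq k \leq K_j$. The ultrametric inequality gives $\nu(a_k) \geq \min_{|S|=k}\sum_{i \in S}\nu(\alpha_i^{-1})$, and an exchange argument (swapping an index in a lower level for one in a higher level strictly increases $\sum_{i \in S}\nu(\alpha_i)$ in the lex order, because each $\nu_l - \nu_{l+1}$ is lex-positive and the weights $s_l := |S \cap (K_{l-1}, K_l]|$ are non-negative with $s_l \leq k_l$) shows the minimum equals the claimed expression and is attained by filling levels $1, \ldots, j-1$ completely and placing the remaining $k - K_{j-1}$ indices in level $j$. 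At $k = K_j$ this optimum has a \emph{unique} minimizer $S = \{1, \ldots, K_j\}$, so the corresponding monomial is the unique minimum-valuation term in $e_{K_j}$ and the ultrametric inequality becomes an equality; for intermediate $k$ there are $\binom{k_j}{k - K_{j-1}} > 1$ tied monomials, permitting cancellation so that only the lower bound survives in general.

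Setting $P_j := (K_j, -\sum_{l \leq j} k_l \nu_l) \in \R^{n+1}$, the polytope $N_n(f)$ contains each $P_j$, and the estimate on intermediate $\nu(a_k)$ places every other support point (with its attached upper orthant in the last $n$ coordinates) above the chain $P_0, P_1, \ldots, P_m$. To verify that each segment $[P_{j-1}, P_j]$ is an actual bounded edge, I would exhibit a supporting linear functional $\phi_{\mathbf{u}}(d, \mathbf{j}) := (\mathbf{u} \cdot \nu_j) d + \mathbf{u} \cdot \mathbf{j}$ that is constant along the edge, and take $\mathbf{u} = (1, \epsilon, \epsilon^2, \ldots, \epsilon^{n-1})$ for $\epsilon > 0$ sufficiently small: this choice converts the lex-order estimates at the other support points into honest numerical strict inequalities against the edge, while $\mathbf{u} > 0$ keeps $\phi_{\mathbf{u}}$ bounded below on the recession cone $\{0\} \times \R_{\geq 0}^n$. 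The generalized slope of $[P_{j-1}, P_j]$ is $(-k_j \nu_j)/k_j = -\nu_j$, and since the $\nu_j$ are distinct this yields exactly $m$ bounded edges with the advertised slopes. The hardest step is precisely this translation: the valuation estimates naturally live in the lex order on $\Gamma_\R$, whereas $N_n(f)$ is defined using the coordinatewise order on $\R^n$, and one must check carefully that the approximation $\mathbf{u} = (1, \epsilon, \ldots, \epsilon^{n-1})$ faithfully transfers lex-order information without creating spurious bounded edges transverse to the $d$-axis.
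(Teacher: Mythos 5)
Your proof follows the same overall strategy as the paper's: express $a_k$ as an elementary symmetric polynomial in the $1/\alpha_i$, apply the ultrametric inequality, observe that at the break points $k = K_j$ the minimizing index set is unique so equality holds, and deduce the vertices $P_j$ and their connecting edges. Where you diverge from the paper is in two respects, and both are genuine improvements. First, the paper asserts that when $K_{j-1} < k < K_j$ the minimum is attained at more than one index set ``so the inequality is strict'' --- this is false in general (ties permit cancellation, they do not force it); you correctly use only the lower bound $\nu(a_k) \geq m_k$, which places $(k, \nu(a_k))$ on or above the segment $[P_{j-1}, P_j]$, and that is all that is needed. Second, and more substantially, you explicitly address the fact that the valuation estimates live in the \emph{lexicographic} order on $\Gamma_{\R}$, while $N_n(f)$ is a genuine polytope in $\R^{n+1}$ whose edges are Euclidean objects; you bridge this by exhibiting, for each $j$, a supporting functional $\phi_{\mathbf{u}}$ with $\mathbf{u} = (1,\epsilon,\ldots,\epsilon^{n-1})$ that is constant on $[P_{j-1},P_j]$, bounded below on the recession cone $\{0\}\times\R_{\geq 0}^n$ since $\mathbf{u}>0$, and (for $\epsilon$ small) converts the finitely many lex-strict inequalities at the other support points into honest numerical strict inequalities. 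The paper never addresses this translation at all. Your supporting-hyperplane step is only sketched, but the construction is correct and the argument it outlines does close the gap; it would be worth writing out the exchange argument and the $\epsilon$-smallness verification fully, since the choice of $\epsilon$ must dominate all of the finitely many differences $\nu(a_k) - m_k$ and $\Phi(k, m_k) - \Phi(P_{j-1})$ that appear.
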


  \begin{proof}
Without loss of generality, we will assume that in the factorization (\ref{eq:E1}), the roots $\alpha_{1}, \ldots, \alpha_{k_{1}}$ have valuation $\nu_{1},$  the roots $\alpha_{k_{1}+1}, \ldots, \alpha_{k_{1}+k_{2}}$ have valuation $\nu_{2}$ and so on. \\
We have  $a_{i} = (-1)^{i} \mbf{e}_{i}(1/ \alpha_{1}, \ldots, 1/ \alpha_{n}),$ where $\mbf{e}_{i}$ is the $i$-th elementary symmetric function on $n$ variables.

Applying valuation, $\nu: \no{\K}\rightarrow \Gamma_{\R},$ we get

\begin{equation}\label{eq:E2}
\nu(a_{i}) = \nu(\mbf{e}_{i}(1/\alpha_{1}, \ldots, 1/\alpha_{n}))  \geq \min \left \{ \sum_{\vert J\vert = i} (\nu({1/\alpha_{j_{1}}}) + \ldots \nu({1/\alpha_{j_{i}}}))  \right \} 
\end{equation}

where $J = \lbrace j_{1}, \ldots, j_{i} \rbrace $ runs over subsets of $\lbrace 1, 2, \ldots, n\rbrace$ of cardinality $i.$

When $i=0,$  $a_{0}= 1$ and $\mbf{e}_{0} = 1$ by convention.

When $0 < i < k_{1} ,$ the minimum in the equation (\ref{eq:E2}), is attained at more than one index set $ J .$ 
So, the inequality is strict, i.e. $\nu(a_{i})  > \min \{ \sum_{\vert J\vert = i} (\nu({1/\alpha_{j_{1}}}) + \ldots \nu({1/\alpha_{j_{i}}}))  \}. $ When $ i = k_{1}$ the minimum is unique and thus we have $\nu(a_{k_{1}}) =  -k_{1}\nu_{1}.$ We thus conclude that a (possibly) bounded edge emanating from the vertex $(0,0)$ of $N_{n}(f)$ passes through the vertex $(k_{1}, \nu(a_{k_{1}})).$  This edge has generalized slope $-\nu_{1}.$\\
Next, when $k_{1} < i ,$ equation (\ref{eq:E2}) and monotonicity of $v_{i}$ imply that any vector joining $(0,0) $ to $(i, \nu(a_{i}))$ has generalized slope $ >  -\nu_{1} .$ Thus we conclude that exactly one bounded edge emanates from $(0,0)$ and terminates at $(k_{1}, \nu(a_{k_{1}})).$ The points $ (j,\nu(a_{j}))$ for $ 0 < j < k_{1} $ lie inside the polyhedron. So, the next possibly bounded edge emanates from the vertex $(k_{1}, a_{k_{1}}).$ \\
Repeating, the previous argument with $ k_{1}< i < k_{1}+k_{2} $; using equation (\ref{eq:E2}) and the monotonicity of  $\nu_{i},$ we conclude that exactly one bounded edge emanates from $(k_{1}, \nu(a_{k_{1}}))$ and terminates at $(k_{1}+k_{2}, \nu(a_{k_{1}+k_{2}}))$ with generalized slope $-\nu_{2}.$  \\
Continuing like this, we see that there is exactly one edge emanating from $(k_{1} + \ldots + k_{j}, \nu(a_{k_{1}+ \ldots+ k_{j}}))$ which terminates at $((k_{1} + \ldots + k_{j+1}, \nu(a_{k_{1}+ \ldots+ k_{j+1}})))$ with generalized slope $ - \nu_{j}.$
\end{proof}

\begin{rem}Let us consider $\displaystyle f(t_{1}, \ldots, t_{n}, x)$  in the ring $\displaystyle \Kn{0}[t_{1}, \ldots, t_{n}, x].$ We are interested in (fractional) power series $\displaystyle \phi(t_{1}, \ldots, t_{n})$ such that formally, it satisfies the identity $\displaystyle f(t_{1}, \ldots, t_{n}, \phi) = 0.$

The case $n=1$ was already considered by Newton.
For $n > 1,$ an algorithm to construct such a power series solution has been given by McDonald. His construction gives explicit description of the co-efficients arising in $\phi,$ in terms of the Newton polytope of $f$ and a choice of an (admissible) edge of the polytope. Moreover he shows that complete systems of series solutions are parametrized by coherent edge paths of the Newton polytope in $\R^{n+1}. $ We refer the reader to \cite{McD} for the details of this construction. 

In this framework, we are only interested one particular series solution corresponding to the edge-path determined by the valuation $\nu$ on the coefficients of $f.$
\end{rem}

\section{Tropicalization}
Let $\tor_{\K} \eqdef \mathbb{G}_{m, \K}^{m}$ be the \ndim{m} algebraic torus over $\K,$ where $\K$ is a \ndim{n} local field. Let $\tor_{\K} (\Lf)$ denote the $\Lf$ valued points. Let $M$ denote the lattice of characters of $\tor_{\K} .$
  
By definition, we have a non-degenerate pairing $ ev: \tor_{\K} (\K^{al}) \times M \longrightarrow \no{\K^{al}}$ which is given by evaluation of a character at a point ($ ev(\mbf{t}, \chi) \mapsto \chi(\mbf{t})$). 
This defines a map $ \displaystyle \tor_{\K}(\K^{al}) \longrightarrow \mbox{Hom}(M, \K^{al}).$ 
Composing with the valuation $\nu: \K^{al} \rightarrow \Gamma_{\R}$ gives us a map $\rho :\tor_{\K}(\K^{al})  \longrightarrow \mbox{Hom}(M, \Gamma_{\R}).$
\begin{defn} \label{D:trop}
Tropicalization is the vertical arrow which makes the following diagram commutative. The diagonal arrow is just pointwise valuation.

\begin{displaymath}
\xymatrix{
\tor_{\K}(\K^{al}) \ar[d]_{Trop} \ar[rd]^{\nu}  &  \\
\text{Hom}(M, \Gamma_{\R})  \ar[r]_{ev} & \Gamma_{\R}^{m} }
\end{displaymath}

\end{defn}

\begin{rem}
\ben[(i)]
\item Tropicalization is the map $\rho$ constructed above with some additional normalization conditions.
\item If $\X$ is any closed, reduced sub-scheme of the torus, then $Trop(\X)$ is defined as the topological closure of the image. Clearly $Trop(\X) = Trop(\X_{1}) \cup Trop(\X_{2})$ if  $\X_{i} $ is an irreducible component, so we may as well assume that $\X$ is irreducible.
\een
\end{rem}

\subsection{Hypersurface in a torus}
Let $f(\mbf{x})$ be a Laurent polynomial over $\K$ defining a closed, reduced, irreducible subscheme $\X_{f}$  of  the torus $\tor_{\K}.$ \\
Let us assume, to be specific, that $f$ is given explicitly  as 
\[
f(\mbf{x}) =  \sum_{\mbf{d} \in \Z^{m}} a_{\mbf{d}} \mbf{x}^{\mbf{d}}
\]

\noindent Associated to $f$ we define a piecewise linear map $ f^{\tau} : \Gamma_{\R}^{m} \rightarrow \Gamma_{\R}$ defined by 
\begin{eqnarray*}
f^{\tau}(u_{1}, \ldots, u_{m}) & = & \min_{\text{Supp}(f)}\left \{ \nu(a_{\mbf{d}}) + \sum d_{i}. u_{i}  \right \} 
\end{eqnarray*}
where $u_{i} \in \Gamma_{\R}$ and $d_{i}. u_{i} $ is the usual scalar multiplication in $\Gamma_{\R}.$

Let $T(f) \eqdef $ the locus of points in $\Gamma_{\R}$ where the minimum is attained at two or more distinct indices. This is equivalent to saying that  is the $T(f)$ is the non-differentiability locus of $f^{\tau}.$ Evidently $T(f)$ is a $\Gamma$ rational polyhedron in $\Gamma_{\R}^{m}$ of dimension $nm-n.$

\begin{thm} \label{T:amoeba}
The subsets $T(f)$ and $ \mathcal{A}(\X_{f}) \eqdef \overline{\{ \nu(\mbf{t}) : \mbf{t} \in \X_{f}(\K^{al}) \}}$ of $\Gamma_{\R}^{m}$ are equal.
\end{thm}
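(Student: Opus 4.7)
Plan. We prove the two inclusions separately. The easy direction is a formal consequence of the ultrametric inequality; the reverse reduces, via a generic specialization, to the one-variable statement Theorem~\ref{T:Newton}.

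\textbf{Inclusion $\mathcal{A}(\X_f) \subseteq T(f)$.} For any $\mbf{t} \in \X_f(\K^{al})$ we have $\sum_{\mbf{d}} a_\mbf{d}\,\mbf{t}^\mbf{d}=0$, so the $\nu$-value of the sum is $\infty$. The strong triangle inequality gives $\nu(f(\mbf{t})) \ge f^{\tau}(\nu(\mbf{t}))$, with equality whenever the minimum defining $f^\tau$ is attained at a unique index. Since the left side is $\infty$ while $f^\tau(\nu(\mbf{t})) \in \Gamma_\R$, the minimum must be attained at two or more indices, so $\nu(\mbf{t})\in T(f)$. Taking topological closures in $\Gamma_\R^m$ finishes this direction.

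\textbf{Inclusion $T(f) \subseteq \mathcal{A}(\X_f)$.} Both sides are closed in $\Gamma_\R^m$ and $T(f)$ is a $\Gamma$-rational polyhedral complex, so it suffices to prove $T(f)\cap\Gamma^m\subseteq \mathcal{A}(\X_f)$. Given $\mbf{u}\in T(f)\cap\Gamma^m$, pick $c_i\in\K^\times$ with $\nu(c_i)=u_i$ and substitute $x_i=c_iy_i$, obtaining $\tilde f(\mbf{y})=\sum\tilde a_\mbf{d}\,\mbf{y}^\mbf{d}$ with $\nu(\tilde a_\mbf{d})=\nu(a_\mbf{d})+\sum d_iu_i$. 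After rescaling by a scalar we may assume the lexicographic minimum of these valuations is $0$; by hypothesis this minimum is attained at two distinct indices $\mbf{d}^{(1)}\ne\mbf{d}^{(2)}$, and a permutation of the $y_i$-coordinates lets us assume $d^{(1)}_m\ne d^{(2)}_m$. It is now enough to find $\mbf{y}\in (\K^{al})^m$ with $\nu(y_i)=0$ for all $i$ and $\tilde f(\mbf{y})=0$, since $\mbf{t}=(c_1y_1,\ldots,c_my_m)$ will then lie in $\X_f(\K^{al})$ with $\nu(\mbf{t})=\mbf{u}$.

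\textbf{Specialization and appeal to Theorem~\ref{T:Newton}.} Write $\tilde f = \sum_k h_k(y_1,\ldots,y_{m-1})\,y_m^k$ with $h_k=\sum_{\mbf{d}:\,d_m=k}\tilde a_\mbf{d}\,y_1^{d_1}\cdots y_{m-1}^{d_{m-1}}$. Reducing the $\tilde a_\mbf{d}$ modulo the maximal ideal of $\Rint{\nu}^{\K^{al}}$ yields Laurent polynomials $\bar h_k$ over the ultimate residue field $\Kn{0}^{al}$; the surviving monomials are indexed by $\{\mbf{d}:\,d_m=k,\,\nu(\tilde a_\mbf{d})=0\}$ and, being indexed by distinct exponent vectors $(d_1,\ldots,d_{m-1})$, they cannot cancel. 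In particular $\bar h_{d^{(1)}_m}$ and $\bar h_{d^{(2)}_m}$ are nonzero. Since $\Kn{0}^{al}$ is infinite, one can choose $\bar y_1,\ldots,\bar y_{m-1}\in (\Kn{0}^{al})^{\times}$ outside the zero loci of both, and (by surjectivity of the reduction $\Rint{\nu}^{\K^{al}}\twoheadrightarrow\Kn{0}^{al}$) lift them to units $y_i\in (\K^{al})^\times$ with $\nu(y_i)=0$. The specialized one-variable polynomial $g(y_m)\eqdef\tilde f(y_1,\ldots,y_{m-1},y_m)$ then has coefficients of valuation exactly $0$ in the distinct degrees $d^{(1)}_m$ and $d^{(2)}_m$, and valuation $\ge 0$ (lex order) in every other degree. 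Its $n$-extended Newton polytope therefore possesses a lex-bottom bounded edge joining $(d^{(1)}_m,0)$ and $(d^{(2)}_m,0)$ of generalized slope $0$; Theorem~\ref{T:Newton} (after the standard normalization to make the constant term $1$) produces a root $y_m^{\circ}\in\K^{al}$ with $\nu(y_m^\circ)=0$, completing the proof.

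The main obstacle is the genericity step: we must rule out accidental cancellations in $\bar h_{d^{(1)}_m}$ and $\bar h_{d^{(2)}_m}$ upon reduction to $\Kn{0}^{al}$. This rests on the distinctness of the monomial exponents $(d_1,\ldots,d_{m-1})$ inside each $\bar h_k$ (so the minimum-attaining term survives), the surjectivity of the residue map on $\Rint{\nu}^{\K^{al}}$, and the infinitude of $\Kn{0}^{al}$; the preliminary coordinate permutation making $d^{(1)}_m\ne d^{(2)}_m$ is precisely what places the two surviving coefficients in distinct $y_m$-degrees, so that Theorem~\ref{T:Newton} detects a slope-zero edge.
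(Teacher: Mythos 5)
Your proof is correct and reduces the hard inclusion to Theorem~\ref{T:Newton} just as the paper does, but it gets there via a genuinely different specialization. The paper restricts $f$ to a monomial curve: for a generic integer vector $\mbf{b}\in\Z^m$ separating the minimizing exponents, it substitutes $\mbf{x}=(z^{b_1},\dots,z^{b_m})$, producing a one--variable polynomial $f_{\mbf{b}}(z)=\sum a_{\mbf{d}}z^{\mbf{b}\cdot\mbf{d}}$ whose $n$-extended Newton polytope has a slope-$0$ edge, and then observes that a root $p$ with $\nu(p)=0$ gives the torus point $(p^{b_1},\dots,p^{b_m})$. You instead project to the last coordinate and take a generic fiber: you choose the first $m-1$ coordinates so that the two critical coefficient polynomials $h_{d^{(1)}_m}, h_{d^{(2)}_m}$ remain units after reduction to $\Kn{0}^{al}$, and then appeal to Theorem~\ref{T:Newton} for the resulting single-variable polynomial in $y_m$. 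Both specializations are legitimate; the monomial-curve route (which is essentially what Einsiedler--Kapranov--Lind use) avoids an explicit residue-field argument, while your generic-fiber route makes the role of the residue field $\Kn{0}^{al}$ and the ``no accidental cancellation'' phenomenon more transparent.

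Two small points to tighten. First, since you allow $\mbf{u}\in\Gamma^m$ (the divisible value group of $\K^{al}$) rather than only the integral value group of $\K$, the scalars $c_i$ with $\nu(c_i)=u_i$ should be taken in $\no{\K^{al}}$, not in $\K^{\times}$. Second, you implicitly use that the residue field of $\Rint{\nu}^{\K^{al}}$ under the rank-$n$ valuation equals $\Kn{0}^{al}$; this is the standard fact that the residue field of an algebraically closed valued field is algebraically closed, but it's worth saying, since both the infinitude of $\Kn{0}^{al}$ and the surjectivity of the reduction map rest on it.
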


\begin{proof}
It suffices to show that $T(f) \cap \Gamma^{m}$ = $ \mathcal{A}(\X_{f}) \cap \Gamma^{m} $ where $\Gamma^{m}$ is the canonical divisible subgroup of $\Gamma_{\R}^{m}.$ \\
If $ c \in \mathcal{A}(\X_{f}) \cap \Gamma^{m}$ then there is a $\mbf{t} \in (\no{\K^{al}})^{m}$ such that $\nu(\mbf{t}) = c.$ As a result $f(\mbf{t}) = 0$ which forces $f^{\tau}$ to be non-differentiable at $c.$ This shows that 
$ \mathcal{A}(\X_{f}) \cap \Gamma^{m} \subset T(f) \cap \Gamma^{m} .$ \\
Now suppose $c \in T(f) \cap \Gamma^{m}$  then there is a $\mbf{t} \in (\no{\K^{al}})^{m}$  such that $v(\mbf{t}) =  c .$ We will be done if we show that $f(\mbf{t}) = 0.$ After a change of co-ordinates we reduce to the case when $c = 0.$

\begin{lema}
If $\mbf{0} \in T(f) \cap \Gamma^{m}$ then $\mbf{0} \in \mathcal{A}(\X_{f}) \cap \Gamma^{m}.$
\end{lema}
\begin{proof}
$\mbf{0} \in T(f) \cap \Gamma^{m}$ implies that there are $k$ indices ($k \geq 2$) $\mbf{d}_{1}, \ldots, \mbf{d}_{k}$  such that 
\begin{eqnarray}  \label{E:cond}
\nu(a_{\mbf{d}_{1}}) =\nu(a_{\mbf{d}_{2}}) = \ldots = \nu(a_{\mbf{d}_{k}})  < \nu(a_{\mbf{d}_{k^{\prime}}}) 
\end{eqnarray}
for every $\mbf{d}_{k^{\prime}}  \in \text{ Supp}(f) \setminus \{\mbf{d}_{1}, \ldots, \mbf{d}_{k} \}.$ Factoring out $a_{\mbf{d}_{1}}$ if necessary we may assume that $\nu(a_{\mbf{d}_{1}}) = 0.$ For a generic $\mbf{b} \in \Z^{m}$ such that the usual inner-product $\mbf{b}. \mbf{d}_{1} \neq \mbf{b}. \mbf{d}_{2} = \ldots \neq \mbf{b}. \mbf{d}_{k}$ consider the polynomial $f_{\mbf{b}}(z) \eqdef \sum_{\mbf{d}\in \Z^{m}} a_{\mbf{d}} \;z^{\mbf{b}.\mbf{d}} .$ Then by condition (\ref{E:cond}), $N_{n}(f)$ will have bounded edges with generalized slope $\mbf{0}.$ Thus we have a root $p$ of $f_{\mbf{b}}(z)$ such that $\nu(p)= 0.$ Clearly $\mbf{p}_{0} = (p^{b_{1}}, \ldots, p^{b_{m}})$  is a root of the required form.
\end{proof}
This completes the proof of theorem (\ref{T:amoeba}).
\end{proof}

\begin{cor}\label{theorem: hyp}
The tropicalization of a reduced, irreducible hypersurface $\X_{f}$  in $\tor_{\K},$ is a rational polyhedral complex of dimension $mn-n.$
\end{cor}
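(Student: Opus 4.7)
The plan is to combine Theorem \ref{T:amoeba} with a direct polyhedral analysis of the bend locus $T(f)$. Unwinding Definition \ref{D:trop}, a choice of $\Z$-basis for $M$ identifies $\mbox{Hom}(M,\Gamma_{\R})$ with $\Gamma_{\R}^{m}$; under this identification $Trop$ sends $\mbf{t}\in\tor_{\K}(\K^{al})$ to the coordinatewise valuation $\nu(\mbf{t})$, so $Trop(\X_{f})=\mathcal{A}(\X_{f})$, which equals $T(f)$ by the preceding theorem. It thus suffices to show that $T(f)$ is a $\Gamma$-rational polyhedral complex whose maximal cells have dimension $mn-n$.

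For this I would exploit that $\Gamma_{\R}\cong\R^{n}$ as an abelian group with the lex order comparing one coordinate at a time. Writing $u_{i}=(u_{i,1},\ldots,u_{i,n})$ and $\nu(a_{\mbf{d}})=(\nu_{1}(a_{\mbf{d}}),\ldots,\nu_{n}(a_{\mbf{d}}))$, the $k$-th coordinate of $\ell_{\mbf{d}}(u)=\nu(a_{\mbf{d}})+\sum_{i}d_{i}u_{i}$ depends only on the slab $u_{\bullet,k}=(u_{1,k},\ldots,u_{m,k})\in\R^{m}$. The lex-minimum defining $f^{\tau}$ therefore decomposes hierarchically: minimize coordinate $n$ over $\mbf{d}\in\mbox{Supp}(f)$ to obtain a tied set $S_{n}$; among $S_{n}$ minimize coordinate $n-1$ to obtain $S_{n-1}\subseteq S_{n}$; iterate down to $S_{1}$. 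Non-differentiability of $f^{\tau}$ means $|S_{1}|\geq 2$, and since $S_{1}\subseteq S_{k}$ this forces $|S_{k}|\geq 2$ at every level. The strata of $T(f)$ are naturally indexed by nested chains $(S_{n}\supseteq\cdots\supseteq S_{1})$ with $|S_{1}|\geq 2$.

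The dimension count then comes out cleanly. At level $k$ the tie condition imposes $|S_{k}|-1$ independent linear equations entirely in the $u_{\bullet,k}$ slab, with integer slopes $\mbf{d}-\mbf{d}'$ and $\Gamma$-valued intercepts, while the strict-dominance conditions for monomials outside $S_{k}$ are $\Gamma$-rational inequalities in the same slab. Because equations at distinct levels live in disjoint slabs they are automatically independent, so each stratum has codimension $\sum_{k}(|S_{k}|-1)\geq n$ and hence dimension $\leq mn-n$. Top cells of dimension exactly $mn-n$ arise when $|S_{k}|=2$ for every $k$: a single pair of monomials $\mbf{d},\mbf{d}'\in\mbox{Supp}(f)$ that ties at every one of the $n$ levels. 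Gluing the strata along their common faces (obtained by enlarging some $|S_{k}|$) assembles $T(f)$ into a $\Gamma$-rational polyhedral complex; Example \ref{E:impure} shows it need not be pure.

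The hard part will be exhibiting a nonempty top-dimensional cell, i.e.\ producing a pair $\mbf{d},\mbf{d}'\in\mbox{Supp}(f)$ together with a nonempty open polyhedron in $\R^{mn}$ on which these two monomials tie at every level while all others are strictly lex-dominated. Slab by slab this reduces to the classical fact that the rank-one tropical hypersurface of any polynomial with at least two monomials is nonempty of pure codimension one, plus a genericity choice of $(\mbf{d},\mbf{d}')$ so that the strict-dominance conditions at the $n$ levels have a common interior point. Since $\X_{f}$ is a reduced, irreducible hypersurface $|\mbox{Supp}(f)|\geq 2$, which is just enough to launch the recursion and guarantee the top cell.
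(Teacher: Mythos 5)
Your proposal follows the paper's own route: use Theorem~\ref{T:amoeba} to identify $Trop(\X_{f})$ with the bend locus $T(f)$, then analyze $T(f)$ polyhedrally. The difference is one of detail, not strategy: the paper dispatches the polyhedral and dimension claims about $T(f)$ with the single word ``Evidently'' just before Theorem~\ref{T:amoeba}, while you actually carry out the slab-by-slab lexicographic filtration, which is a genuine service since that assertion is exactly the content of the corollary. One small imprecision to correct: the tie condition at level $k$ does not in general impose $|S_{k}|-1$ \emph{independent} linear equations on the slab $u_{\bullet,k}$; if the exponent vectors in $S_{k}$ are affinely dependent (e.g.\ three collinear points of $\mathrm{Supp}(f)$ with matching $\nu_{k}$-values) the rank drops, so the codimension of a stratum need not equal $\sum_{k}(|S_{k}|-1)$, and the constraints at a given level can even be inconsistent (giving an empty stratum). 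What you actually need, and what does hold, is the weaker statement that each slab carries at least one nontrivial affine constraint whenever $|S_{k}|\geq 2$, so every nonempty stratum has codimension at least $n$. Combined with your construction of a nonempty cell of codimension exactly $n$ (a facet $\{\mbf{d},\mbf{d}'\}$ of the rank-one tropical hypersurface attached to $\nu_{n}$, propagated by a single hyperplane in each slab $u_{\bullet,k}$ for $k<n$, where no strict-dominance conditions remain once $S_{k+1}=\{\mbf{d},\mbf{d}'\}$), this yields dimension exactly $mn-n$ as required.
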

\subsection{General closed sub-schemes}
We now consider the  general case where $\X$ is a closed, reduced, irreducible $\K$ sub-scheme of $\tor_{\K}$ of dimension $d.$ 
Let $\I_{\X}$ denote the ideal defining $\X.$ 
\subsubsection{ Initial ideals and Weight functions}

Let $M$  be an abelian group, following analogy with the character lattice we will use $\chi^{u}$ to denote an element of $M$  and $\chi^{u+v} = \text{  the sum of } \chi^{u} \text{ and } \chi^{v}.$ Let $\K[M]$ denote the group ring over a field $\K.$ Let $\Delta^{\times} \subset \K[M]$ denote the group of monomials (i.e. elements of the form 
$ a\chi^{u}$ where $a \neq 0$). Let $(G, +)$ be any ordered abelian group. Generalizing the approach of Sturmfels \cite{Sturm}, we consider the following definition.

\begin{defn}
A weight function on $\K[M]$ is any group homomorphism \linebreak[4] $\displaystyle \text{ \textbf{wt}}: \Delta^{\times} \rightarrow G.$ We extend \textbf{wt} to non-zero $ f \eqdef \sum_{u \in \Delta^{\times}} a \chi^{u}$ by defining  
\[\text{ \textbf{wt}}(f) = \min_{u \in \text{ Supp}(f)} \left \{ \text{\textbf{wt}}(a \chi^{u})\right \}. \]
\end{defn} 
\begin{exmple}\label{E:sturm} If $M = \Z^{m}$, then $\K[M] \cong \K[x_{1}^{\pm}, \ldots, x_{m}^{\pm}],$ and $G = (\R, +),$ the set $\text{Hom}(M, \R) $ provides a natural space of weight functions ($\text{\textbf{wt}}_{\omega}(a \chi^{u}) = \omega(u)$ for $\omega \in \text{Hom}(M, \R)$). With this identification,  $\R^{n}$ appears as the space of weight vectors in \cite[Chapter 1]{Sturm}.

The above definition easily adapts to the case of rings like $\K[x_{1}, \ldots, x_{m}]$ by using monoids ($M = \mathbb{N}^{m}$)  instead of groups.
\end{exmple}

We will consider the case when, $M$ is a character lattice of the torus, $\K$ is a \ndim{n} local field and $G = \Gamma_{\R}.$ For a  $\omega \in \mbox{Hom}(M, \Gamma_{\R})$ we will consider the following weight function (and its extension to linear combination of monomials as defined above).
\begin{equation} \label{eq:weight}
 \mbox{\textbf{wt}}_{\omega}\left (a_{u}\chi^{u} \right) = \nu(a_{u}) - \langle \omega, u \rangle
\end{equation}

Given an weight function \textbf{wt} on $\K[M]$ and a non-zero $f \in \K[M]$ let  $In_{\text{\textbf{wt}}}(f) $ be the linear combination of the terms of lowest weight in $f.$ More generally if $J$ is an ideal of $\K[M],$ then $In_{\text{\textbf{wt}}}(J)$ is the ideal generated by the set $\displaystyle \left \{ In_{\text{\textbf{wt}}}(f) : f \in J \right \}.$

\begin{rem}
\ben[(i)]
\item In the setup of example (\ref{E:sturm}), $In_{\text{\textbf{wt}}_\omega}(f)$  and $In_{\text{\textbf{wt}}_\omega}(J)$ are called (in \cite{Sturm}), \textit{the initial form} and \textit{the initial ideal} respectively. We will keep this terminology in our case as well.
\item We will for simplicity, use $In_{\omega}$ to denote $In_{\text{\textbf{wt}}_\omega}$ where the weight function will always be defined by (\ref{eq:weight}).
\een
\end{rem}

Recall that $M$ denotes the character lattice of $\tor_{\K}.$  So the ring of functions on the torus $\K^{al}[ \tor_{\K}]$ is the group ring $\K^{al}[M].$ Let us fix $\omega \in \text{Hom}(M, \Gamma_{\R}).$ Then by equation (\ref{eq:weight}) we get a weight function $\text{\textbf{wt}}_{\omega}$ on  $\K^{al}[M].$ \\
Let  $\Rint{\nu}[M]^{\omega} \eqdef \{ f \in \K^{al}[M] \text{ such that}\; \mathbf{wt}_{\omega}(f) \geq 0 \}.$ These are analogs of the ``tilted group rings" considered by Payne in \cite{Pay1}.  Then $\tor^{\omega}  \eqdef \mbox{Spec}(\Rint{\nu}[M]^{\omega})$ defines an integral model of $\tor_{\K}$ over $\mbox{ Spec}(\Rint{v}).$ 
Let $\X^{\omega}$ be the Zariski closure of $\X$ in $\tor^{\omega}$ and $\X_{\omega}$ denote the fiber of $\X^{\omega}$ over $\text{ Spec}(\Rint{\nu}/ \mathfrak{m}_{\nu}),$ where $\mathfrak{m}_{\nu}$ is the maximal valuation ideal of $\Rint{\nu}.$

Let us now introduce a system of co-ordinates on $\Gamma_{\R}$ (recall $\Gamma_{\R} \cong \R^{n}$) and let $\omega = \bfvec{\omega}$ in co-ordinates. We now use $\omega_{n}$ to define a different weight function on $\K^{al}[M]$ which only considers the $\K$ as a local field. More precisely:
\begin{eqnarray*}
\text{\textbf{wt}}_{\omega_{n}}\left (\sum_{u \in M} a_{u}\chi^{u}\right) &=& \sum_{u} w_{(n)}(a_{u}) - \langle \omega_{n}, u \rangle .
\end{eqnarray*}

As before, we can consider $\Rint{\Kn{n}}[M]^{\omega_{n}}$ as the tilted group ring and \linebreak[3] $\tor^{\omega_{n}} \eqdef \mbox{Spec}(\Rint{\Kn{n}}[M]^{\omega_{n}})$ defines an integral model of the torus $\tor_{\K}.$

Now consider $\X^{\omega_{n}}$ the Zariski closure of $\X$ in $\tor^{\omega_{n}} $ and let $\X_{\omega_{n}}$ denote the fiber of $\X^{\omega_{n}}$  over the unique maximal ideal $\mideal{\Kn{n}}$ of $\Rint{\Kn{n}}.$ Clearly $\X_{\omega_{n}}$ is a closed subscheme of the torus $\tor_{\Kn{n-1}},$ the $m-$ dimensional torus defined over $\Kn{n-1}.$ Now consider a weight function on $\Kn{n-1}^{al}[\tor_{\Kn{n-1}}]$ given by $\omega_{n-1}$ and repeating the above steps word-for-word we obtain a sub-scheme $\X_{(\omega_{n-1}, \omega_{n})}$ of $\tor_{\Kn{n-2}}.$

Iterating  this process $n$ times we have a closed sub-scheme $\X_{\bfvec{\omega}}$ of $\tor_{\Kn{0}}.$\\
As closed subschemes of the torus $\tor_{\Kn{0}}$ over $\Kn{0}$ we have an isomorphism $ \displaystyle 
\X_{\omega}\cong \X_{\bfvec{\omega}}$ where $\displaystyle \omega$ corresponds to $\bfvec{\omega}$ in co-ordinates.

\begin{thm}
$\displaystyle Trop(\X)  = \left \{ \omega \in \mbox{ Hom}(M, \Gamma_{\R})  \mbox{ such that } \X_{\omega} \neq \emptyset \right \} $
\end{thm}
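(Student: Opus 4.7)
The strategy is to induct on the rank $n$ of $\K$, with the base case $n=1$ supplied by the classical fundamental theorem of tropical geometry over a rank-one local field (Kapranov for hypersurfaces, generalized to arbitrary closed subvarieties by Speyer--Sturmfels, see \cite{Sp,Ss}). The inductive hypothesis will be the present theorem with $n$ replaced by $n-1$, applied to closed reduced subschemes of tori over the $(n-1)$-dimensional local field $\Kn{n-1}$.

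For the inclusion $Trop(\X)\subseteq\{\omega:\X_{\omega}\neq\emptyset\}$, first take $\omega$ in the image of $Trop$, say $\omega=Trop(\mbf{t})$ with $\mbf{t}\in\X(\K^{al})$. The definition of the tilted group ring $\Rint{\nu}[M]^{\omega}$ combined with the identity $\nu(\chi^u(\mbf{t}))=\langle\omega,u\rangle$ for all $u\in M$ shows that $\mbf{t}$ extends to a $\mbox{Spec}(\Rint{\nu}^{al})$-valued point of $\tor^{\omega}$ landing in $\X^{\omega}$; reduction modulo $\mideal{\nu}$ then produces the required $\Kn{0}^{al}$-point of $\X_{\omega}$. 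To extend this conclusion from the image of $Trop$ to its topological closure, I plan to show that $\{\omega:\X_{\omega}\neq\emptyset\}$ is itself closed in $\mbox{Hom}(M,\Gamma_{\R})$: it is contained in the intersection $\bigcap_{f\in\I_{\X}}T(f)$, each $T(f)$ is the closed $\Gamma$-rational polyhedral complex supplied by Theorem \ref{T:amoeba}, and a tropical-basis argument reduces the intersection to a finite one.

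For the reverse inclusion, suppose $\X_{\omega}\neq\emptyset$ and write $\omega=\bfvec{\omega}$ in coordinates. The iterated identification $\X_{\omega}\cong\X_{\bfvec{\omega}}$ forces in particular the rank-one initial degeneration $\X_{\omega_n}\subset\tor_{\Kn{n-1}}$ to be nonempty, and the further iterated reduction $(\X_{\omega_n})_{(\omega_1,\ldots,\omega_{n-1})}$ is nonempty. Applying the inductive hypothesis to an irreducible component of $\X_{\omega_n}$ over the $(n-1)$-dimensional local field $\Kn{n-1}$, with the truncated weight $(\omega_1,\ldots,\omega_{n-1})$, yields a point $\bar{\mbf{t}}\in\X_{\omega_n}(\Kn{n-1}^{al})$ whose rank-$(n-1)$ tropicalization equals $(\omega_1,\ldots,\omega_{n-1})$. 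One then lifts $\bar{\mbf{t}}$ to a point $\mbf{t}\in\X(\K^{al})$ with $w_{(n)}$-valuation equal to $\omega_n$; by the filtration $G_{\bullet}$ of Section \ref{Sec:Higherlocalfields}, the last coordinate of $\nu(\mbf{t})$ equals $w_{(n)}(\mbf{t})=\omega_n$ while the truncation of $\nu(\mbf{t})$ to its first $n-1$ coordinates is the rank-$(n-1)$ valuation of the reduction $\bar{\mbf{t}}$, which yields $\nu(\mbf{t})=\omega$.

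The main obstacle is this last lifting step: the rank-one fundamental theorem in its bare form only asserts existence of \emph{some} preimage of $\omega_n$ under $Trop$, not one that reduces to a prescribed point of the special fiber. I would address this by appealing to the stronger surjectivity version of the rank-one theorem, namely that the reduction map from the $\omega_n$-tropical fibre of $\X(\K^{al})$ onto $\X_{\omega_n}(\Kn{n-1}^{al})$ is surjective. This surjectivity is implicit in the standard proofs via flatness of the integral model $\X^{\omega_n}\to\mbox{Spec}(\Rint{\Kn{n}})$ together with the valuative criterion, applied after passing to an appropriate finite extension inside $\K^{al}$. A minor additional subtlety is that $\X_{\omega_n}$ need not be irreducible or reduced, so the induction is applied to an irreducible component containing a lift of the given point of $\X_{\omega}$.
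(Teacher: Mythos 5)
Your proposal is essentially correct and follows the same route as the paper: both reduce the reverse inclusion to lifting a point of $\X_{\omega}$ through a chain of rank-one initial degenerations using Payne's fiber surjectivity theorem (cited in the paper as \cite[Theorem~4.1]{Pay1}), and your induction on $n$ simply unrolls to the paper's direct chain $\X \rightsquigarrow \X_{\omega_n} \rightsquigarrow \cdots \rightsquigarrow \X_{\bfvec{\omega}}$. Two points worth flagging. First, you correctly identify that the bare rank-one theorem is insufficient and that one needs the stronger surjectivity of the reduction map from the tropical fiber to the initial degeneration; this is exactly the form of Payne's theorem the paper invokes, so your ``obstacle'' is already resolved by the existing citation rather than requiring a new argument from flatness. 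Second, there is a small slip in the way the inductive hypothesis is applied: the statement being proved equates the nonemptiness locus with $Trop$, which by definition is the \emph{closure} of the image, so $(\omega_1,\ldots,\omega_{n-1})\in\mathfrak{Trop}^{(n-1)}(\X_{\omega_n})$ does not by itself produce an actual point $\bar{\mbf{t}}$ with that exact rank-$(n-1)$ valuation. What the argument really establishes, at every level, is that $\{\omega:\X_\omega\neq\emptyset\}$ equals the \emph{image} of the tropicalization map (and is closed); you should state and induct on that slightly stronger form, which is in any case exactly what the paper's chain of liftings proves. Your treatment of the forward direction via an integral point of $\tor^{\omega}$ and reduction mod $\mideal{\nu}$, and the explicit closure argument via $\bigcap_f T(f)$, are more careful than the paper's (which only sketches the image case via non-monomiality of $In_{\omega}(\I_{\X})$), but they lead to the same conclusion.
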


\begin{proof}
If $\omega \in Trop(\X) $ then there is $\mbf{t} \in \tor_{\K^{al}}$ such that $f(\mbf{t}) =  0 $ for every $f \in \I_{\X}.$ As a result $In_{\omega}(f)$ is not monomial. Thus the ideal $In_{\omega}(\I_{\X})$ is not monomial and hence it defines a non-empty subscheme of the torus (i.e. there are solutions to these polynomials with all co-ordinates non-zero). Thus proves that $\X_{\omega}$ is not empty.

Recall that  $\X_{\omega} = \X_{\bfvec{\omega}}.$ We will be done if we can show that any point of $\X_{\omega}(\Kn{0}^{al})$ lifts to a point of $\X(\K^{al}).$
    
  \begin{lema}
  Using the above notation, any point of $\X_{\omega}(\Kn{0}^{al})$ lifts to a point of $\X(\K^{al}).$  
  \end{lema}
  
  \begin{proof}
  We consider the following chain of schemes 
$ \displaystyle
\X \rightsquigarrow \X_{\omega_{n}} \rightsquigarrow  \X_{(\omega_{n-1},\omega_{n})}  \rightsquigarrow \ldots \X_{\bfvec{\omega}} $;  where  $\X_{(\omega_{i}, \ldots, \omega_{n})}$ is  a special fiber (over the maximal valuation ideal of the DVR  \Rint{\Kn{i }^{al}}) of an integral model of $\X_{(\omega_{i+1}, \ldots, \omega_{n})}.$

 It follows from \cite[Theorem 4.1]{Pay1} that any point on $\X_{(\omega_{i+1}, \ldots, \omega_{n}})$ lifts to a point of $\X_{(\omega_{i}, \ldots, \omega_{n})}.$ \footnote{In fact, Payne proves that the fibers are Zariski dense.} Thus we  have the desired lift of a point of $\X_{\bfvec{\omega}}.$
 \end{proof}
 This finishes the proof of the theorem.
 \end{proof}
 
  \begin{exmple}
  We will now consider the polynomial $f(x,y) = x + y + 1,$ and describe $Trop(\X_{f})$ over a two dimensional local field $\C((t_{1}))((t_{2}))$. 
 
 To do this, we consider weights as vectors in $\R^{4}$ of the form $(\omega_{11}, \omega_{12}; \omega_{21}, \omega_{22}).$ For such a weight consider the polynomial  \[f(t_{1}^{\omega_{11}}t_{2}^{\omega_{12}}x, t_{1}^{\omega_{21}}t_{2}^{\omega_{22}}y) = t_{1}^{\omega_{11}}t_{2}^{\omega_{12}}x + t_{1}^{\omega_{21}}t_{2}^{\omega_{22}}y + 1. \]
 There are now two steps. We first consider initial terms with respect to the $t_{2}$ variables and then consider degeneration with respect to $t_{1}$ variables as summarized in table \ref{T: init}. As a result the tropicalization in this case, is a two dimensional polyhedral complex embedded in $\R^{4}.$ It ``looks" like figure \ref{F:two}.
  \begin{table}[h] \label{T: init}
 \caption{\small{Tropicalization of $x + y + 1 = 0$ over $\C((t_{1}))((t_{2})).$ }}
 {\footnotesize
\begin{tabular}{| c |  c || c | c |}
 \hline 
 Condition   & Initial  & Condition  & Initial  \\
 on weights & Degenerations & on weights & Degenerations \\
 $\omega_{12} , \omega_{22}$ &  & $\omega_{12} , \omega_{22}$ &\\
 \hline \hline
 $\omega_{12} = \omega_{22} < 0 $  & $ t_{1}^{\omega_{11}}x +t_{1}^{\omega_{21}} y$  &  $\omega_{11} = \omega_{21}$ & $x + y$ \\
 \hline 
  $\omega_{12} = \omega_{22} = 0 $  & $ t_{1}^{\omega_{11}}x +t_{1}^{\omega_{21}} y + 1$  & 
  $\omega_{11} = \omega_{21} <  0$ & $x + y$ \\
  \hline
 ditto  & ditto & $\omega_{11} = \omega_{21} = 0 $ & $ x + y + 1$ \\
  \hline
 ditto & ditto &  $\omega_{11} = 0  ; \omega_{21} > 0 $  & $x + 1$ \\
  \hline
  ditto & ditto & $\omega_{11}> 0  ; \omega_{21} = 0 $  & $y + 1$ \\
  \hline 
  $\omega_{12} = 0 ; \omega_{22} > 0 $  & $ t_{1}^{\omega_{11}}x + 1$ & $\omega_{11} = 0 ; \omega_{21} \in \R$ & $x+ 1$  \\
  \hline
  $\omega_{12}> 0 ; \omega_{22} = 0 $  & $t_{1}^{\omega_{21}}y + 1$ & $\omega_{11}  \in \R ; \omega_{21} = 0 $ & $ y + 1$  \\
  \hline
  \end{tabular}
}
\end{table}
 
 \begin{figure}[h]
 \begin{center}
  \includegraphics[scale = 0.75]{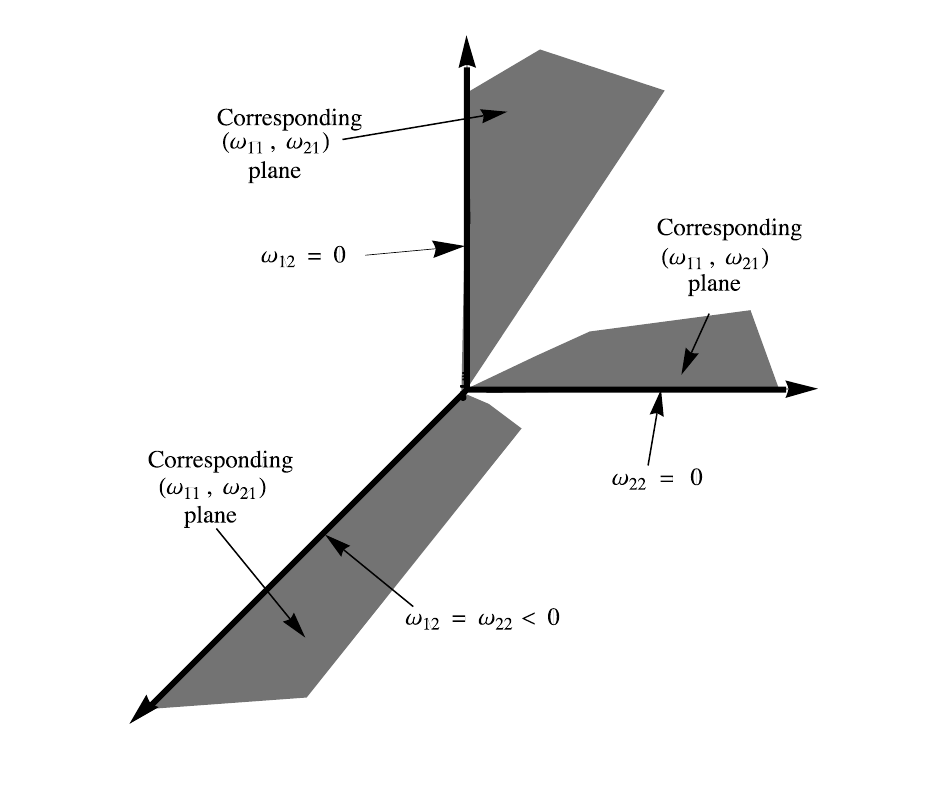}
  \end{center}
  \caption{\small {Tropicalization of $ x +  y +1$ over $\C((t_{1}))((t_{2})).$}}
  \label{F:two}
  \end{figure}
   We recall that the tropicalization of $\X_{f}$ over a local field, say $\C((t))$ comprises of three half lines meeting at the origin, see \cite{Gath} for a nice exposition.
  \end{exmple}
 \subsubsection{Dimension}
It is well known from the work of \cite{Bieri-Groves, EKL, Ss} that if $\X$ is a $d$- dimensional subscheme of the torus, over a local field then $Trop(X)$ is a rational polyhedral complex of pure dimension $d.$  

In the case of an \ndim{n} local field, let us start with the simple observation that if $\K$ is an \ndim{n} local field over $\Kn{0},$ then it is also a \ndim{j} local field over $\Kn{n-j}.$ 

So, if $\X$ is a scheme defined over $\K$ (we always assume $\X$ is a closed, reduced, irreducible sub-scheme of $\tor_{\K}$) then we let $\mathfrak{Trop}^{(j)}(\X)$ denote the tropicalization  of $\X$ over $\K,$ where $\K$ is now considered as a \ndim{j} local field over $\Kn{n-j}.$ In this notation, we are interested in $Trop(\X)  = \mathfrak{Trop}^{(n)}(\X).$
Consider the retraction maps of ambient vector spaces $\mathfrak{r}^{j}_{n}: \mathfrak{Trop}^{(n)}(\X) \rightarrow \mathfrak{Trop}^{(j)}(\X) .$ The fiber of $\mathfrak{r}^{j}_{n}$ over any point $(\omega_{j}, \ldots, \omega_{n}) \in \mathfrak{Trop}^{(j)}(\X)$ is $Trop(\X_{(\omega_{j}, \ldots, \omega_{n})}).$ 

The dimension of $\X_{(\omega_{j}, \ldots, \omega_{n})}$ is that of dim($\X$) for generic choice of $(\omega_{j}, \ldots, \omega_{n})$ (and it can drop for special values of $\omega_{j}$), thus by induction on the dimension of the local field $\K$ we get that dim($Trop(\X)) = nd .$
\begin{thm} \label{T:main2}
The tropicalization of a \ndim{d} closed, reduced, irreducible subscheme of an algebraic torus over a \ndim{n} local field is a rational polyhedral complex of dimension $nd.$
\end{thm}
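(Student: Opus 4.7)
The plan is to proceed by induction on $n$, the dimension of the local field $\K$. For $n=1$, this is the classical Bieri-Groves theorem \cite{Bieri-Groves, EKL, Ss}: for a $d$-dimensional closed, reduced, irreducible $\X \subset \tor_{\K}$ over a rank-one local field, the tropicalization is a rational polyhedral complex of pure dimension $d$. Corollary \ref{theorem: hyp} already independently handles the hypersurface case in arbitrary $n$ (there $d=m-1$, and the bound $(m-1)n = dn$ matches).

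For the inductive step, I would exploit the observation immediately preceding the statement: $\K$ is simultaneously an $n$-dimensional local field over $\Kn{0}$ and a $1$-dimensional local field over $\Kn{n-1}$, and $\Kn{n-1}$ is itself an $(n-1)$-dimensional local field. Consider the retraction $\mathfrak{r}^{1}_{n}: \gtr^{(n)}(\X) \to \gtr^{(1)}(\X)$ sending $\bfvec{\omega}$ to $\omega_{n}$. By the setup of the preceding subsection, the fiber over $\omega_{n} \in \gtr^{(1)}(\X)$ is naturally identified with $\gtr^{(n-1)}(\X_{\omega_{n}})$, where $\X_{\omega_{n}}$ is the special fiber over $\mideal{\Kn{n}}$ of the Zariski closure of $\X$ in the $\omega_{n}$-tilted torus $\tor^{\omega_{n}} = \mbox{Spec}(\Rint{\Kn{n}}[M]^{\omega_{n}})$; crucially, $\X_{\omega_{n}}$ is a closed subscheme of the $m$-dimensional torus $\tor_{\Kn{n-1}}$ over an $(n-1)$-dimensional local field, so the inductive hypothesis applies.

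I would then combine three ingredients. First, the base $\gtr^{(1)}(\X)$ is a rational polyhedral complex of pure dimension $d$ by the classical case. Second, for $\omega_{n}$ in the relative interior of a maximal cone of $\gtr^{(1)}(\X)$, the degeneration $\X_{\omega_{n}}$ is reduced of pure dimension $d$, the standard consequence of flatness of the integral model $\X^{\omega_{n}}$ over the DVR $\Rint{\Kn{n}}$ and reducedness of $\X$ (after decomposing $\X_{\omega_{n}}$ into irreducible components, to which the induction applies componentwise as remarked earlier in the paper). Third, by the inductive hypothesis, each $\gtr^{(n-1)}(\X_{\omega_{n}})$ is a rational polyhedral complex of dimension $(n-1)d$. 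Stratifying $\gtr^{(n)}(\X)$ by open subcones of $\gtr^{(1)}(\X)$ on which the combinatorial type of the initial degeneration is constant yields a rational polyhedral decomposition whose top strata have dimension $d + (n-1)d = nd$.

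The main obstacle is verifying that for generic $\omega_{n}$ the degeneration $\X_{\omega_{n}}$ really has the expected dimension $d$, and that the family $\omega_{n} \mapsto \gtr^{(n-1)}(\X_{\omega_{n}})$ varies piecewise-linearly and rationally along strata of the base so that the union is globally a rational polyhedral complex. The first is handled by classical Gr\"obner-style dimension preservation for flat degenerations of reduced schemes; the second follows because the tilted integral models $\tor^{\bfvec{\omega}}$ depend piecewise-linearly on $\bfvec{\omega}$, so a common rational polyhedral refinement of $\gtr^{(1)}(\X)$ makes the combinatorial type of $\X_{\omega_{n}}$ locally constant. Non-generic $\omega_{n}$ may contribute lower-dimensional strata, which is what accounts for the non-purity phenomenon illustrated in Example \ref{E:impure}.
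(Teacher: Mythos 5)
Your proposal takes essentially the same approach as the paper: induction on the rank $n$ of the local field via the retraction $\mathfrak{r}^{1}_{n}$, whose fiber over $\omega_{n} \in \gtr^{(1)}(\X)$ is the tropicalization over the residue field $\Kn{n-1}$ of the initial degeneration $\X_{\omega_{n}}$, together with preservation of the dimension of the degeneration for generic weights. You are somewhat more explicit than the paper about two points it leaves implicit --- passing to irreducible components of the possibly reducible $\X_{\omega_{n}}$, and verifying the rational piecewise-linear structure of the assembled complex --- but the core argument coincides.
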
 

We finally consider an example showing that the polyhedral complex associated to a tropicalization is not necessarily pure. Recall, a polyhedral complex  of dimension $d$ is called pure if all the maximal faces are of the same dimension $d.$ 
\begin{exmple} \label{E:impure}
Let us consider the polynomial $f(x, y) = (x- t_{1}) (x - t^{2}_{1}) + y^{2},$ defined over $\C((t_{1}))((t_{2})).$  For any given weight $(\omega_{11}, \omega_{12}; \omega_{21}, \omega_{22}) \in \R^{4},$ we consider the polynomial $f(t_{1}^{\omega_{11}}t_{2}^{\omega_{12}}x, t_{1}^{\omega_{21}}t_{2}^{\omega_{22}}y).$ Consider the table \ref{T:imex} describing the relevant initial degenerations of $f.$  

\begin{table}[h]\label{T:imex}
\caption{\small{Tropicalization of $(x- t_{1}) (x - t^{2}_{1}) + y^{2} = 0$ over $\C((t_{1}))((t_{2})).$ }}
{\footnotesize
\begin{tabular}{|l|l||l|l|}
 \hline
 Condition & Initial & Condition  & Initial  \\
on weights & Degenerations &on weights & Degenerations \\
$\omega_{12} , \omega_{22}$ & &  $\omega_{12} , \omega_{22}$ & \\
 \hline \hline
  $\omega_{12} = \omega_{22} < 0 $  & $  t_{11}^{2\omega_{11}}x^{2} + t_{1}^{2\omega_{21}} y^{2}$  &  $\omega_{11} = \omega_{21} $ & $x^{2} + y^{2}$\\

 \hline 
  $\omega_{12} = \omega_{22} = 0 $  & $(t_{1}^{\omega_{11}}x - t_{1})(t_{1}^{\omega_{11}}x - t_{1}^{2})  + t_{1}^{2 \omega_{21}} y^{2}$  & 
  $\omega_{11} = \omega_{21} <  1 $ & $x^{2} + y^{2}$ \\
  \hline
ditto  & ditto & $\omega_{11} =  \omega_{21} = 1 $  & $ x^{2} - x + y^{2}$ \\
  \hline
 ditto & ditto &  $\omega_{11} = 1 ; \omega_{21} > 1 $  & $x^{2} - x$ \\
  \hline
  ditto & ditto & $ 1< \omega_{11} < 2 ; \omega_{21} = \omega_{11} / 2 $  & $ - x + y^{2}$ \\
  \hline
   ditto & ditto & $ \omega_{11} =2 ; \omega_{21} \in \R $  & $ x - 1$ \\
  \hline 
  $\omega_{12} = 0 ; \omega_{22} > 0 $  & $ (t_{1}^{\omega_{11}}x - t_{1})(t_{1}^{\omega_{11}}x - t_{1}^{2})$ & $\omega_{11} = 1 \text{ or } 2 ; \omega_{21} \in \R$ & $ x - 1$  \\
  \hline
 \end{tabular}
 }
\end{table}

In this example the line segment given by $(t, 0, t/2, 0)$ where $1 \leq t \leq 2$, joining the points $(1,0,1/2,0)$ to $(2,0,1,0)$ is a maximal face of dimension 1. So the polyhedral complex $Trop(\X_{f})$ is not pure.
\end{exmple}


\end{document}